\newtheorem{theorem}{Theorem}[section]
\newtheorem{lemma}[theorem]{Lemma}
\numberwithin{equation}{section}
\theoremstyle{definition}
\theoremstyle{remark}
\newtheorem{remarks}[theorem]{Remarks}
\newtheorem*{remark*}{Remark}
\newcommand{\1}[1]{{\mathbf 1}{\{#1\}}}
\newcommand{\2}[1]{{\mathbf 1}{(#1)}}
\newcommand{\ZP}{\mathbb{Z}_+}
\newcommand{\R}{\mathbb{R}}
\newcommand{\RP}{\mathbb{R}_+}
\newcommand{\blob}{\mkern2mu\raisebox{2pt}{\scalebox{0.4}{$\bullet$}}\mkern2mu}
\DeclareMathOperator{\E}{\mathbb{E}}
\renewcommand{\Pr}{{\mathbb P}}
\newcommand{\tra}{{\scalebox{0.6}{$\top$}}}
\newcommand{\eps}{\varepsilon}
\newcommand{\bpi}{\mbox{\boldmath${\pi}$}}
\newcommand{\ba}{\mathbf{a}}
\newcommand{\bb}{\mathbf{b}}
\newcommand{\bd}{\mathbf{d}}
\newcommand{\by}{\mathbf{y}}
\newcommand{\0}{\mathbf{0}}
\newcommand{\rc}{{\mathrm{c}}}
\newcommand{\cF}{{\mathcal{F}}}
\newcommand{\tX}{\widetilde{X}}
\def\namedlabel#1#2{\begingroup  
    (#2)%
    \def\@currentlabel{#2}%
    \phantomsection\label{#1}\endgroup
}
\title{Non-homogeneous random walks on a half strip\\ with generalized Lamperti drifts}
\author{Chak Hei Lo \and Andrew R. Wade} 
\date{\today}
\begin{document}

\maketitle

\begin{abstract}
We study  a Markov chain on $\RP \times S$, where $\RP$ is the non-negative real numbers and $S$ is a finite set,
in which when the $\RP$-coordinate is large, the $S$-coordinate of the process is approximately Markov with stationary
distribution $\pi_i$ on $S$. If $\mu_i(x)$ is the mean drift of the $\RP$-coordinate of the process at $(x,i) \in \RP \times S$,
we study the case where $\sum_{i} \pi_i \mu_i (x) \to 0$, which is the critical
regime for the recurrence-transience phase transition. If $\mu_i(x) \to 0$ for all $i$, it is natural to study the \emph{Lamperti} 
case where $\mu_i(x) = O(1/x)$; in that case the recurrence classification is known, but we prove new results on existence and non-existence
of moments of return times. If $\mu_i (x) \to d_i$ for $d_i \neq 0$ for at least some $i$, then it is natural to
study the \emph{generalized Lamperti} case where $\mu_i (x) = d_i + O (1/x)$. By exploiting a transformation which maps
the generalized Lamperti case to the Lamperti case, we obtain a recurrence classification and existence of moments results
for the former. The generalized Lamperti case is seen to be more subtle, as the recurrence classification
depends on correlation terms between the two coordinates of the process.
\end{abstract}

\smallskip
\noindent
{\em Keywords:} Non-homogeneous random walk; recurrence classification; Lyapunov function; Lamperti's problem.

\smallskip
\noindent
{\em 2010 Mathematics Subject Classifications:}
60J10 (Primary).

\section{Introduction}
\label{introduction}

Markov processes $(X_n, \eta_n)$ on structured state-spaces $\Sigma$ contained in $\mathbb{X} \times S$ are of interest
in many applications. In this paper we are interested in the case where $X_n \in \mathbb{X} = \RP$ and 
$\eta_n \in S$  a finite set, in which case $\Sigma$ is a \emph{half strip}. Motivating applications include
\begin{itemize}
\item modulated queues~\cite{MN}, where $X_n$ represents the queue length and $\eta_n$ tracks the state of a service regime
or buffer;
\item regime-switching processes in mathematical finance, where $\eta_n$ tracks a state of the market;
\item physical processes with internal degrees of freedom~\cite{AK}, where $\eta_n$ tracks internal momentum states of a particle.
\end{itemize}
In much of the literature, $\eta_n$ is itself a Markov chain; in this case $(X_n, \eta_n)$
is known as a \emph{Markov-modulated Markov chain} or a \emph{Markov random walk}~\cite{hp}; in the contexts of strips, 
study of these models goes back to Malyshev~\cite{VM}.
The case where $\eta_n$ is Markov also includes
processes that can be represented as \emph{additive functionals of Markov chains} \cite{rogers}.
Such models pose a variety
of mathematical questions, which have been studied rather deeply over several decades using various techniques that take advantage
of the additional Markov structure, and much is now known. 
 
Much less is known when $\eta_n$ is \emph{not} Markov. 
In the present paper, following~\cite{IF,AW}, we are interested in the case where $\eta_n$ is not Markov but is, roughly speaking, approximately Markov when $X_n$ is large,
with stationary
distribution $\pi_i$ on $S$.   This 
 relaxation is necessary to probe
more intimately the recurrence/transience phase transition for these models.
If $\mu_i(x)$ is the mean drift of the $\RP$-coordinate of the process at $(x,i) \in \Sigma$,
then crucial to the asymptotic behaviour of the process
are the asymptotics of the $\mu_i$ in comparison to the $\pi_i$.
If $\mu_i (x) \to d_i \in \R$ for each $i \in S$,
then the process is transient if $\sum_i \pi_i d_i > 0$ and positive-recurrent if $\sum_i \pi_i d_i < 0$
\cite{IF,AW}. The critical case $\sum_i \pi_i d_i  =0$ is more subtle, and to investigate the
recurrence/transience phase transition it is natural, by analogy with classical
work of Lamperti on $\RP$~\cite{L1,L2}, to study the case where
$\sum_i \pi_i \mu_i (x)  = O (1/x)$. In particular, the law of the increments
is \emph{non-homogeneous} in $X_n$, which typically precludes  $\eta_n$
 from being Markovian, but admits our weaker conditions. 

The \emph{Lamperti drift} case
in which every line has $\mu_i(x) = O(1/x)$ was studied in~\cite{AW}.
The main focus of the present paper is the \emph{generalized Lamperti drift}
case where $\mu_i (x) = d_i + O (1/x)$ with $\sum_{i \in S} \pi_i d_i = 0$.

We obtain a recurrence classification for the generalized Lamperti drift case,
and in the recurrent case we obtain results on existence and non-existence of passage-time moments,
 quantifying the recurrence. We obtain these results
by use of a transformation of the process into one with \emph{Lamperti drift},
and so we establish new results on existence and non-existence of passage-time moments
in that setting first. Our method is different from that of~\cite{AW}, which relied on
an analysis of an embedded Markov chain, in that we make use of some Lyapunov functions
for the half-strip model. 

To finish this section, we present an application of our results to a simple model of
a
 \emph{correlated random walk}.
Suppose that 
  a particle performs a random walk on $\ZP$ with a short-term memory: the distribution of $X_{n+1}$
depends not only on the current position $X_n$, but also on the `direction of travel' $X_n - X_{n-1}$. Formally,
$(X_n , X_n - X_{n-1} )$ is a Markov chain on $\ZP \times S$ with $S = \{ -1 , +1 \}$,
with
\[ \Pr [ (X_{n+1},\eta_{n+1} ) = (x+j, j) \mid (X_n , \eta_n ) = (x, i ) ] = q_{ij} (x) , ~\text{for} ~ i,j\in S. \]
Then for $i \in S$,
\[ \mu_i (x) = \E [ X_{n+1} - X_n \mid (X_n, \eta_n ) = (x, i) ] = q_{i,+1} (x) - q_{i, -1} (x) .\]
The simplest model has $q_{ii} (x) = q > 1/2$ for $x \geq 1$, so the walker has a 
tendency to continue in its direction of travel. 

More generally,  suppose that for $q \in (0,1)$ and constants $c_{-1}, c_{+1} \in \R$ and $\delta>0$,
\begin{equation}
\label{e:crw}
 q_{ij} (x) = \begin{cases} 
q  + \frac{i c_i }{2x} + O (x^{-1-\delta} ) & \text{if } j =i; \\
1 - q  - \frac{i c_i }{2x} + O (x^{-1-\delta} ) & \text{if } j \neq i.
\end{cases}
\end{equation}
Here is the recurrence/transience classification for this model, which includes as the special case $q=1/2$ the recurrence
classification in Corollary~3.1 of~\cite{AW}.

\begin{theorem}
\label{thm:Example}
Consider the correlated random walk specified by~\eqref{e:crw}. Let $c = (c_{+1} + c_{-1} ) /2$.
If $c < -q$, then the walk is positive-recurrent. If $c > q$, then the walk is transient. If $|c| \le q$, then the walk is null-recurrent.
\end{theorem}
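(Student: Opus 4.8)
The plan is to recognize \eqref{e:crw} as a special case of the generalized Lamperti drift model, to pass to an associated \emph{Lamperti drift} process via the change of coordinates that (as announced in the introduction) underlies the paper's method, and then to read off the trichotomy from the Lamperti drift classification. First I would fix the parameters. Here $S=\{-1,+1\}$, and the limiting $\eta$-transition matrix $P$ has $P_{ii}=q$, $P_{i,-i}=1-q$; as $q\in(0,1)$ it is irreducible and doubly stochastic, with unique stationary law $\pi_{+1}=\pi_{-1}=\tfrac12$. From \eqref{e:crw} one computes $\mu_i(x)=q_{i,+1}(x)-q_{i,-1}(x)=d_i+\tfrac{c_i}{x}+O(x^{-1-\delta})$ with $d_{+1}=2q-1=-d_{-1}$, so that $\sum_i\pi_i d_i=0$: this is precisely the critical generalized Lamperti regime. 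Since the $X$-increments lie in $\{-1,+1\}$, all moment and boundedness hypotheses of the general results are automatic and $\E[(X_{n+1}-X_n)^2\mid X_n=x,\eta_n=i]\equiv1$, so it remains only to identify the quantities entering the classification.

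Next I would carry out the transformation. Let $h:S\to\R$ solve the Poisson-type equation $(I-P)h=d$; this is solvable precisely because of the centring $\sum_i\pi_i d_i=0$, and $h_{\pm1}=\pm\tfrac{2q-1}{2(1-q)}$ is an explicit solution (two solutions differ by an additive constant, which is immaterial). Put $Y_n:=X_n+h(\eta_n)$; then $(Y_n,\eta_n)$ is a Markov chain on a half strip (after the harmless shift making the first coordinate nonnegative, $h$ being bounded), and because $h$ annihilates the order-one part of the drift, $\E[Y_{n+1}-Y_n\mid X_n=x,\eta_n=i]=\tfrac{\tilde c_i}{x}+O(x^{-1-\delta})$ with $\tilde c_i=c_i+\sum_j r_{ij}h_j$, where $r_{ij}$ is the coefficient of $1/x$ in $q_{ij}(x)$; a short computation gives $\tilde c_i=\tfrac{c_i}{2(1-q)}$ for both $i$. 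Crucially one must also recompute the limiting second moment of the $Y$-increment: expanding $(Y_{n+1}-Y_n)^2=(\Delta X)^2+2(\Delta X)\bigl(h(\eta_{n+1})-h(\eta_n)\bigr)+\bigl(h(\eta_{n+1})-h(\eta_n)\bigr)^2$, using $\Delta X=\eta_{n+1}$ and the transition probabilities, one obtains $\lim_{x\to\infty}\E[(Y_{n+1}-Y_n)^2\mid X_n=x,\eta_n=i]=\tfrac{q}{1-q}$ for both $i$. The cross term $2(\Delta X)\bigl(h(\eta_{n+1})-h(\eta_n)\bigr)$ is precisely the correlation-between-coordinates contribution flagged in the abstract, and is what makes this limit differ from $1$.

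Finally I would invoke the Lamperti drift classification for the half strip: $(Y_n,\eta_n)$ is a Lamperti drift process with Lamperti constants $\tilde c_i$ and limiting second moments $\tfrac{q}{1-q}$, hence it is transient iff $2\sum_i\pi_i\tilde c_i>\sum_i\pi_i\tfrac{q}{1-q}$, positive-recurrent iff $2\sum_i\pi_i\tilde c_i<-\sum_i\pi_i\tfrac{q}{1-q}$, and null-recurrent iff $\bigl|2\sum_i\pi_i\tilde c_i\bigr|\le\sum_i\pi_i\tfrac{q}{1-q}$. Substituting $\sum_i\pi_i\tilde c_i=\tfrac{c_{+1}+c_{-1}}{4(1-q)}=\tfrac{c}{2(1-q)}$ and $\sum_i\pi_i\tfrac{q}{1-q}=\tfrac{q}{1-q}$ and multiplying through by $1-q>0$ turns these into $c>q$, $c<-q$, and $|c|\le q$, which is the claim; since $|Y_n-X_n|$ is bounded, $(X_n,\eta_n)$ shares the recurrence type of $(Y_n,\eta_n)$, so this transfers back to the original walk. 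The work is mostly bookkeeping --- propagating the $O(x^{-1-\delta})$ errors uniformly through the transformation (routine here, the jumps being bounded) --- but two points deserve care: getting the transformed limiting second moment right, since the naive value $1$ yields the wrong threshold; and the boundary cases $|c|=q$, which fall under null-recurrence only because \eqref{e:crw} provides error control of order $x^{-1-\delta}$ with $\delta>0$ rather than merely $x^{-1}$. (For $q=\tfrac12$ the map is trivial, $h\equiv0$, and one recovers Corollary~3.1 of~\cite{AW}.)
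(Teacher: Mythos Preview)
Your proposal is correct and follows essentially the same route as the paper. The paper's proof simply cites Theorem~\ref{thm:L1} after computing the quantities $U$ and $V$ of~\eqref{eq:UV} (with the choice $a_{-1}=0$, $a_{+1}=\tfrac{2q-1}{1-q}$ in place of your symmetric $h$), and since Theorem~\ref{thm:L1} is itself established via exactly the transformation $Y_n=X_n+h(\eta_n)$ you carry out (see Lemma~\ref{lem:transform} and Section~5), you are in effect inlining that general argument for this particular example rather than invoking the packaged result.
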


\section{Model and main results}

\subsection{Notation}
Let $S$ be a finite non-empty set, and 
let $\Sigma$ be a subset of $\RP \times S$ that is \emph{locally finite}, i.e., $\Sigma \cap ( [0,r] \times S )$ is finite for all $r \in \RP$.
Define for each $k \in S$ the \emph{line} $\Lambda_k := \{x \in \RP: (x,k) \in \Sigma\}$.
Our process of interest is as follows.
\begin{description}
\item
[\namedlabel{ass:basic}{A}]
Suppose that $(X_n , \eta_n)$, $n \in \ZP$, is a time-homogeneous, irreducible Markov chain on $\Sigma$,
a
locally finite subset of  $\RP \times S$. Suppose that for each $k \in S$ the line $\Lambda_k$ is unbounded.
\end{description}

We will typically assume that  the displacement of the $X$-coordinate has bounded $p$-moments for some $p< \infty$:
\begin{description}
\item
[\namedlabel{ass:p-moments}{B$_\textit{p}$}]
There exists a constant $C_p< \infty$ such that for all $n \in \ZP$,
\[
\E [|X_{n+1}-X_n|^p \mid X_n = x, \, \eta_n = i ] \le C_p , \text{ for all }  (x,i) \in \Sigma.
\]  
\end{description}
Most of our results will assume that~\eqref{ass:p-moments} holds for $p >2$.
For $(x,i) \in \Sigma$ and $j \in S$ define
\[
q_{ij}(x) := \Pr [ \eta_{n+1}=j \mid X_n = x, \, \eta_n = i ].
\] 
We suppose that $\eta_n$ is approximately Markov when $X_n$ is large, in the following sense:
\begin{description}
\item
[\namedlabel{ass:q-lim}{Q$_\infty$}]
Suppose that $\lim_{x \to \infty} q_{ij}(x)=q_{ij}$ exists for all $i,j \in S$, and $(q_{ij})$ is an irreducible stochastic matrix. 
\end{description}
Under assumption~\eqref{ass:q-lim} we can define a new process $\eta^*_n$, $n \in \ZP$,
 as a Markov chain on $S$ with transition probabilities given by $q_{ij}$. Since $\eta^*_n$ is irreducible and $S$ is finite, 
there exists a unique stationary distribution $\bpi =(\pi_1,\pi_2, \ldots, \pi_{|S|})^\tra$ on $S$ with 
$\pi_j>0$ for all $j \in S$ and satisfying $\pi_j= \sum_{i \in S} \pi_i q_{ij}$ for all $j \in S$.

A basic quantity is the one-step mean drift in the $X$-coordinate on line $i$:
\[
\mu_i(x) := \E[X_{n+1}-X_n \mid X_n = x, \, \eta_n = i];
\]
note that $\mu_i(x)$ is finite if~\eqref{ass:p-moments} holds for some $p \ge 1$. 
In the simplest case, we suppose that each line has an asymptotically constant drift:
\begin{description}
\item
[\namedlabel{ass:drift-constant}{D$_\text{C}$}]
For each $i \in S$ there exists $d_i \in \R$ such that $\mu_i(x) = d_i + o(1)$ as $x \to \infty$.　
\end{description}
Then we have the following result.
\begin{theorem}
\label{t:drift-constant}
Suppose that~\eqref{ass:basic} holds, and that~\eqref{ass:p-moments} holds for some $p>1$. Suppose also that~\eqref{ass:q-lim} and~\eqref{ass:drift-constant} hold.
Then the following classification applies.
\begin{itemize}
\item If $\sum_{i \in S}d_i\pi_i>0$, then $(X_n,\eta_n)$ is transient.
\item If $\sum_{i \in S}d_i\pi_i<0$, then $(X_n,\eta_n)$ is positive-recurrent.
\end{itemize}
\end{theorem}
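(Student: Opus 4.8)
The plan is to build a Lyapunov function on the half strip $\Sigma$ adapted to the stationary distribution $\bpi$, and then invoke the standard Foster--Lyapunov criteria for recurrence/transience of Markov chains. The essential point is that although the drift $\mu_i(x)$ on an individual line may be nonzero (and indeed of the `wrong' sign), the process spends, in the long run, a fraction $\pi_i$ of its time on line $i$, so the \emph{effective} drift is $\bar d := \sum_{i\in S} \pi_i d_i$. To make this precise at the level of a one-step drift computation, I would look for weights $(w_i)_{i\in S}$ and try a test function of the form $f(x,i) = x + w_i$ (or, for the transience direction, a bounded monotone modification such as $f(x,i) = 1 - (x+w_i+c)^{-\gamma}$ for small $\gamma>0$), and compute
\[
\E[ f(X_{n+1},\eta_{n+1}) - f(X_n,\eta_n) \mid X_n = x,\ \eta_n = i ]
 = \mu_i(x) + \sum_{j\in S} q_{ij}(x) w_j - w_i .
\]
Using~\eqref{ass:drift-constant} and~\eqref{ass:q-lim}, the right-hand side is $d_i + \sum_j q_{ij} w_j - w_i + o(1)$ as $x\to\infty$. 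So I want to choose the $w_i$ to solve the Poisson-type equation $\sum_j q_{ij} w_j - w_i = \bar d - d_i$ for all $i\in S$; since $\bar d - d_i$ is orthogonal to $\bpi$ (that is, $\sum_i \pi_i(\bar d - d_i)=0$), the Fredholm alternative for the finite irreducible stochastic matrix $(q_{ij})$ guarantees a solution $(w_i)$, unique up to an additive constant. With this choice the one-step drift of $f$ equals $\bar d + o(1)$ uniformly in $i$ as $x\to\infty$.

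From here the two cases are routine applications of classical semimartingale criteria (Foster's criterion and its transience counterpart; see e.g.\ the Lamperti-type results in~\cite{L1,L2} or standard references). If $\bar d < 0$: for $x$ large enough the drift of $f(x,i)=x+w_i$ is bounded above by $\bar d/2 < 0$, uniformly over $i$; combined with the uniform moment bound~\eqref{ass:p-moments} (for $p>1$, hence $p\ge1$, so first moments are controlled) and local finiteness of $\Sigma$ (which handles the bounded part of the strip, where $f$ is bounded), Foster's criterion gives positive recurrence. If $\bar d > 0$: take $g(x,i) = 1 - (x + w_i + c)^{-\gamma}$ with $c$ chosen so that $x+w_i+c > 0$ on all of $\Sigma$; a Taylor expansion, using~\eqref{ass:p-moments} to control the second-order remainder (here $p>1$ is enough to make the correction terms $o(x^{-\gamma})$ after choosing $\gamma$ small), shows the drift of $g$ is nonnegative for $x$ large; since $g$ is bounded and $g(x,i)\to 1$ as $x\to\infty$, the transience criterion for supermartingale-type test functions applies, and irreducibility (from~\eqref{ass:basic}) upgrades this to transience of the chain. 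The role of~\eqref{ass:q-lim} throughout is twofold: it furnishes the matrix $(q_{ij})$ whose Poisson equation defines the weights, and it ensures the $o(1)$ error in the drift computation is genuinely negligible for large $x$.

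I expect the main obstacle to be not any single step but the careful bookkeeping needed to make the one-step drift estimates \emph{uniform in $i\in S$} and to patch together the behaviour on the unbounded part of $\Sigma$ (where the asymptotic drift analysis lives) with the behaviour on the locally finite bounded part (where one needs irreducibility and the finiteness of $\Sigma \cap ([0,r]\times S)$ rather than drift estimates). A secondary technical point is the transience case: one must verify that the test function $g$ is genuinely bounded and that the second-order Taylor remainder can be absorbed --- this is where the moment assumption~\eqref{ass:p-moments} with $p>1$ is used, via choosing the exponent $\gamma \in (0, p-1)$ small enough. None of these steps requires the finer Lamperti-scale ($1/x$) structure, which is consistent with the theorem only needing the crude hypothesis~\eqref{ass:drift-constant}; the critical case $\bar d = 0$, which \emph{does} need that finer structure, is deliberately excluded here and is the subject of the later results obtained via the transformation to the Lamperti case.
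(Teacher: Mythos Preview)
Your proposal is correct and follows the standard Lyapunov-function route. The paper itself omits the proof entirely, remarking only that it is a minor generalization of Theorem~2.4 of~\cite{AW} (with antecedents in~\cite{MM,IF,VM}) and that the argument there carries over; the method you outline---solving the Poisson equation $(Q-I)w = \bar d\mathbf{1} - \bd$ on $S$ to manufacture a linear test function $x+w_i$ whose one-step drift is $\bar d + o(1)$, then applying Foster's criterion for $\bar d<0$ and a bounded transform $1-(x+w_i+c)^{-\gamma}$ with $\gamma\in(0,p-1)$ for $\bar d>0$---is exactly that argument. Indeed, the solvability of your Poisson equation is precisely the content of Lemma~\ref{lem:L4} in the paper (applied with $d_i$ replaced by $d_i-\bar d$), and your truncation-and-Taylor handling of the $p>1$ moment condition mirrors the estimates in Lemma~\ref{lem:indicator}. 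So there is nothing to contrast: your sketch is the expected proof, just made explicit where the paper defers to the literature.
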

Theorem~\ref{t:drift-constant} is a minor generalization of Theorem~2.4 of~\cite{AW}, which took $\Sigma = \ZP \times S$; the proof there readily extends
to the statement here, so we omit the proof; earlier versions of the result, assuming some additional homogeneity, are Theorem~3.1.2 of~\cite{MM} and the results of~\cite{IF,VM}.

Theorem~\ref{t:drift-constant} has nothing to say about the much more subtle case where $\sum_{i \in S} d_i \pi_i = 0$; here further
 assumptions are required to reach any conclusion. One way to achieve $\sum_{i \in S} d_i \pi_i = 0$
is to have $d_i = 0$ for all $i \in S$. In this case, 
by analogy with the classical one-dimensional
work of Lamperti~\cite{L1,L2},
the natural setting in which to probe the recurrence-transience
phase transition is that of \emph{Lamperti drift}, as studied in~\cite{AW}, which we present in Section~\ref{ss:ld}. In this setting
we give new results on existence and non-existence of moments of passage times.

The second possibility, in which $d_i \ne 0$ for some $i \in S$ but nevertheless $\sum_{i \in S} d_i \pi_i = 0$,
leads to what we call \emph{generalized Lamperti drift}, which is the main focus of this paper and is presented in Section~\ref{ss:gld}.
Here we establish a recurrence classification as well as results on passage-time moments.

For the remainder of the paper we introduce the following shorthand to simplify notation:
\[
\E_{x,i}[\,\blob\,] = \E[\, \blob \,\mid X_n=x, \, \eta_n=i].
\]

\subsection{Lamperti drift}
\label{ss:ld}

Define for $(x,i) \in \Sigma$,
\[
\sigma_i^2(x) := \E_{x,i} [ (X_{n+1}-X_n)^2 ];
\]
note that $\sigma_i^2(x)$ is finite if~\eqref{ass:p-moments} holds for some $p \ge 2$.
The case of Lamperti drift is:
\begin{description}
\item
[\namedlabel{ass:drift-lamperti}{D$_\text{L}$}]
For each $i \in S$ there exist $c_i \in \R$ and $s^2_i \in \RP$, with at least one $s^2_i$ non-zero, such that, as $x \to \infty$,
 $\mu_i(x) = \frac{c_i}{x} + o(x^{-1})$ and $\sigma_i^2(x) = s^2_i + o(1)$.
\end{description}
To obtain results at the critical point for the phase transition we will need to strengthen the assumptions~\eqref{ass:q-lim} and~\eqref{ass:drift-lamperti} by imposing additional assumptions:
\begin{description}
\item
[\namedlabel{ass:q-lim+}{Q$_\infty^+$}]
Suppose that there exists $\delta_0 \in(0,1)$ such that $\max_{i,j \in S}|q_{ij}(x)-q_{ij}|=o(x^{-\delta_0})$ as $x \to \infty$. 
\item
[\namedlabel{ass:drift-lamperti+}{D$_\text{L}^+$}]
Suppose that there exist $\delta_1 \in (0,1)$, $c_i \in \R$, and $s^2_i \in \RP$, with at least one $s^2_i$ non-zero, such that for all $i \in S$, as $x \to \infty$,
 $\mu_i(x) = \frac{c_i}{x} + o(x^{-1-\delta_1})$ and $\sigma_i^2(x) = s^2_i + o(x^{-\delta_1})$.
\end{description}

In the Lamperti drift setting, we have the following recurrence classification.

\begin{theorem} 
\label{thm:GW}
Suppose that~\eqref{ass:basic} holds, and that~\eqref{ass:p-moments} holds for some $p>2$. Suppose also that~\eqref{ass:q-lim} and~\eqref{ass:drift-lamperti} hold.
Then the following classification applies.
\begin{itemize}
\item If $\sum_{i \in S}(2c_i-s_i^2)\pi_i>0$, then $(X_n,\eta_n)$ is transient.
\item If $|\sum_{i \in S}2c_i\pi_i|<\sum_{i \in S}s_i^2\pi_i$, then $(X_n,\eta_n)$ is null-recurrent.
\item If $\sum_{i \in S}(2c_i+s_i^2)\pi_i<0$, then $(X_n,\eta_n)$ is positive-recurrent.
\end{itemize}
If, in addition,~\eqref{ass:q-lim+} and~\eqref{ass:drift-lamperti+} hold, 
then the following condition also applies (yielding an exhaustive classification):
\begin{itemize}
\item If $|\sum_{i \in S}2c_i\pi_i|=\sum_{i \in S}s_i^2\pi_i$, then $(X_n,\eta_n)$ is null-recurrent.
\end{itemize}
\end{theorem}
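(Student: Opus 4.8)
The plan is to dispense with the embedded-chain analysis of~\cite{AW} and to work directly on the half strip with a one-parameter family of Lyapunov functions
\[
f_\nu(x,i) := x^\nu + v_i(\nu)\,x^{\nu-2}, \qquad (x,i)\in\Sigma ,
\]
where the exponent $\nu\in\R$ is matched to the regime and the correction vector $\bigl(v_i(\nu)\bigr)_{i\in S}$ is chosen to iron out the dependence on the line $i$ in the one-step drift of $f_\nu$. The net effect is a reduction to a one-dimensional Lamperti-type comparison governed by the $\bpi$-averaged coefficients $\bar c:=\sum_{i\in S}\pi_i c_i$ and $\bar s^2:=\sum_{i\in S}\pi_i s_i^2$; note that $\bar s^2>0$ since every $\pi_i>0$ and some $s_i^2>0$.

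First I would compute the one-step mean change of $f_\nu$. Using~\eqref{ass:p-moments} with $p>2$ to discard, via Markov's inequality, the contribution of the event that $|X_{n+1}-X_n|$ exceeds $x^\theta$ for a fixed $\theta\in(2/p,1)$, and then Taylor-expanding $x\mapsto x^\nu$ to second order while invoking~\eqref{ass:q-lim} and~\eqref{ass:drift-lamperti} to bound the remaining terms, one obtains, uniformly in $i\in S$ (as $S$ is finite),
\[
\E_{x,i}\bigl[f_\nu(X_{n+1},\eta_{n+1})\bigr]-f_\nu(x,i)
= x^{\nu-2}\Bigl(\nu c_i+\tfrac12\nu(\nu-1)s_i^2+(Qv)_i-v_i\Bigr)+o(x^{\nu-2}),
\]
where $Q=(q_{ij})$. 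Since $Q$ is an irreducible stochastic matrix, $I-Q$ has kernel equal to the constant vectors, so by the Fredholm alternative the Poisson equation $(Q-I)v=r$ is solvable precisely when $\sum_i\pi_i r_i=0$. Taking $r_i:=-\nu c_i-\tfrac12\nu(\nu-1)s_i^2+K_\nu$ with
\[
K_\nu:=\tfrac{\nu}{2}\bigl(2\bar c+(\nu-1)\bar s^2\bigr)
\]
gives $\sum_i\pi_i r_i=0$, so a correction $v=v(\nu)$ exists, and with it
\[
\E_{x,i}\bigl[f_\nu(X_{n+1},\eta_{n+1})\bigr]-f_\nu(x,i)=K_\nu\,x^{\nu-2}+o(x^{\nu-2}),\qquad\text{uniformly in }i\in S .
\]

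The non-boundary cases then follow by matching $\nu$ to the regime and invoking the standard semimartingale (Foster--Lyapunov) criteria. If $\sum_{i}(2c_i-s_i^2)\pi_i=2\bar c-\bar s^2>0$, take $\nu<0$ with $|\nu|$ small, so $K_\nu<0$; then $f_\nu$ is a bounded, positive function vanishing at infinity that is a supermartingale outside a bounded set, which forces transience. If $\sum_i(2c_i+s_i^2)\pi_i=2\bar c+\bar s^2<0$, take $\nu=2$ (for which no truncation is needed, $y\mapsto y^2$ being a polynomial): then $K_2=2\bar c+\bar s^2<0$, and $f_2(x,i)=x^2+v_i\to\infty$ has mean drift bounded above by a negative constant for $x$ large, so Foster's criterion gives positive recurrence. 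If $|2\bar c|<\bar s^2$, recurrence follows from $\nu\in\bigl(0,\min\{1,1-2\bar c/\bar s^2\}\bigr)$, which is non-empty since $2\bar c<\bar s^2$ and makes $K_\nu<0$ with $f_\nu\to\infty$ a supermartingale outside a bounded set; that the chain is then \emph{not} positive-recurrent I would extract from the accompanying passage-time moment estimates, which the same device delivers: choosing instead $\nu\in\bigl(1-2\bar c/\bar s^2,\,2\bigr)$ --- non-empty since $2\bar c>-\bar s^2$ --- so that $K_\nu>0$ and $f_\nu\to\infty$ is a submartingale outside a bounded set, and feeding this into the semimartingale criterion for moments of return times (the bound~\eqref{ass:p-moments} with $p>2>\nu$ providing the required control on $|X_{n+1}-X_n|$), yields $\E_{x,i}[\tau_A^{\,\nu/2}]=\infty$ for a bounded set $A$, whence $\E_{x,i}[\tau_A]=\infty$, so the chain is null-recurrent.

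The crux is the boundary $|2\bar c|=\bar s^2$, and this is exactly where~\eqref{ass:q-lim+} and~\eqref{ass:drift-lamperti+} are needed: here the admissible exponent must approach the critical value $\nu^*=1-2\bar c/\bar s^2\in\{0,2\}$, along which $K_\nu\to0$, so the crude error $o(x^{\nu-2})$ is of the same order as the main term $K_\nu x^{\nu-2}$ and cannot be neglected. Under~\eqref{ass:q-lim+} and~\eqref{ass:drift-lamperti+} the error upgrades to $O(x^{\nu-2-\delta})$ for some $\delta>0$, which $K_\nu x^{\nu-2}$ dominates as soon as $\nu$ is taken close enough to (but unequal to) $\nu^*$; this settles $2\bar c+\bar s^2=0$ by letting $\nu\uparrow2$. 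For $2\bar c-\bar s^2=0$ one has $\nu^*=0$, so powers $x^\nu$ are useless and I would instead use logarithmic test functions $f(x,i)=(\log x)^a+(\log x)^{a-1}x^{-2}v_i$, with $v$ again solving a Poisson equation to cancel the $i$-dependence at leading order; the analogous computation leaves a leading drift of order $a(a-1)\bar s^2(\log x)^{a-2}x^{-2}$, negative when $a\in(0,1)$ (giving recurrence) and positive when $a>1$ (giving non-positive-recurrence, the increments of $f$ being $L^1$-bounded), the residual errors once more being absorbed thanks to~\eqref{ass:q-lim+}--\eqref{ass:drift-lamperti+}. Verifying the uniformity in $i$ of all the error terms, and the bookkeeping in these near-critical expansions, is the part I expect to demand the most care.
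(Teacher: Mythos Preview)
Your direct Lyapunov approach on the half strip is genuinely different from the paper's route for this theorem: the paper does not give an independent proof of Theorem~\ref{thm:GW} at all, but simply refers to~\cite{AW}, where the result is obtained by passing to the one-dimensional embedded chain (the $X$-coordinate observed on successive visits to a fixed reference line) and invoking Lamperti's classical results~\cite{L1,L2}. Your plan is instead to work directly with $f_\nu(x,i)=x^\nu+v_i x^{\nu-2}$ and a Poisson-equation correction; this is precisely the machinery the paper develops in Section~3 (your expansion is Lemma~\ref{lem:calf} with $v_i=\tfrac{\nu}{2}b_i$, and your solvability condition for $v$ is Lemma~\ref{lem:L2}). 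So in effect you are proposing to derive the recurrence classification from the same Lyapunov estimates that the paper uses only for the passage-time moment Theorems~\ref{thm:L2} and~\ref{thm:L3}. That is a clean, self-contained alternative to the embedded-chain reduction, and it makes the dependence on $\bar c$ and $\bar s^2$ transparent.

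There is, however, a genuine gap at the boundary $2\bar c+\bar s^2=0$. You write that this case is ``settled by letting $\nu\uparrow 2$'', but that argument delivers only recurrence: for $\nu$ just below $2$ one has $K_\nu<0$ and a supermartingale going to infinity. The other half, non-positive-recurrence, does not follow. Taking $\nu$ just above $2$ gives $K_\nu>0$ and, via the moment criterion you invoke, $\E[\tau_A^{\,\nu/2}]=\infty$ --- but now $\nu/2>1$, so this says nothing about $\E[\tau_A]$. Nor can you use the ``submartingale with $L^1$-bounded increments'' test at $\nu=2$, since the leading drift there is exactly zero and the residual $O(x^{-\delta})$ has uncontrolled sign. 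The situation is symmetric to $\nu^*=0$: just as you (correctly) introduce $(\log x)^a$ there, at $\nu^*=2$ one needs test functions of the form $x^2(\log x)^{-\alpha}$ (plus a Poisson correction) to manufacture a submartingale tending to infinity with bounded increments, which then rules out positive recurrence. Without this second logarithmic family the boundary case $2\bar c+\bar s^2=0$ remains open in your scheme.
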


Theorem~\ref{thm:GW} is a slight generalization of Theorem~2.5 of~\cite{AW}, which took $\Sigma = \ZP \times S$. The proof in~\cite{AW},
which made use of Lamperti's~\cite{L1,L2} results applied to the embedded process obtained by observing the $X$-coordinate on each visit
to a reference line, extends readily to the statement here.

One way to obtain quantitative information on the nature of recurrence is to study moments of \emph{passage times}. For $x \in \RP$, 
define the stopping time $\tau_x :=\min\{n \ge 0 : X_n \le x\}$.
First we state a result that gives conditions for $\E [ \tau_x^s]$ to be finite.

\begin{theorem}
\label{thm:L2}
Suppose that~\eqref{ass:basic} holds, and that~\eqref{ass:p-moments} holds for some $p>2$. Suppose also that~\eqref{ass:q-lim} and~\eqref{ass:drift-lamperti} hold.
If  for some $\theta>0$,
\begin{equation}
\sum_{i \in S} \left[ 2c_i+(2\theta-1)s_i^2 \right] \pi_i <0, 
\label{eqn:L2}
\end{equation}
then for any $s \in \left[0,\theta \wedge \frac{p}{2} \right]$, we have $\E[\tau_x^s]<\infty$ 
for all $x$ sufficiently large.
\end{theorem}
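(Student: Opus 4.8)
The plan is to construct a Lyapunov function of the form $f(x,i) = g(x) \cdot (1 + h_i/x + \dots)$, where $g(x) = x^{2s}$ (roughly), and $h_i$ is a line-dependent correction chosen to absorb the fact that $\mu_i(x)$ need not equal the stationary-averaged drift $\sum_j \pi_j \mu_j(x)$ on each individual line. The point is that, under assumption~\eqref{ass:drift-lamperti}, on line $i$ the process drifts at rate $c_i/x$, which averages under $\bpi$ to $\bar c := \sum_i \pi_i c_i$; but a naive Lyapunov function $x^{2s}$ would see the line-$i$ drift $c_i$ rather than $\bar c$. The correction $h_i$ should be chosen to solve a Poisson-type equation for the jump chain $(q_{ij})$: one wants $h_i - \sum_j q_{ij} h_j$ to equal (a constant multiple of) $c_i - \bar c$, which is solvable precisely because $\sum_i \pi_i (c_i - \bar c) = 0$. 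With this correction in place, a computation of $\E_{x,i}[f(X_{n+1},\eta_{n+1})] - f(x,i)$ should show that the drift of $f$ is negative (for large $x$) exactly under the hypothesis~\eqref{eqn:L2}, i.e. when $\sum_i [2c_i + (2\theta-1)s_i^2]\pi_i < 0$ and $s \le \theta$.

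The key steps, in order, are: (i) record the standard semimartingale criterion for finiteness of passage-time moments — if there is a nonnegative function $f$ on $\Sigma$, bounded on bounded sets and with $f(x,i) \to \infty$ as $x \to \infty$, and constants $\eps>0$, $x_0$ such that $\E_{x,i}[f(X_{n+1},\eta_{n+1})] - f(x,i) \le -\eps f(x,i)^{1-1/s}$ whenever $x > x_0$, then $\E[\tau_{x_0}^s] < \infty$ started from $x > x_0$ (this is Lamperti-type; cite the standard reference, e.g.\ \cite{MM} or \cite{AW}); (ii) solve the Poisson equation $h_i - \sum_{j} q_{ij} h_j = \bar c - c_i$ for $(h_i)_{i \in S}$, possible by the Fredholm alternative since the right side is orthogonal to $\bpi$; (iii) define $f(x,i) := (x + h_i + b)^{2s}$ for a large constant $b$ (to keep the argument positive and the function well-defined on $\Lambda_i$), or more robustly $f(x,i) := x^{2s}(1 + 2s h_i/x)$ truncated/modified near $0$; (iv) Taylor-expand $\E_{x,i}[f(X_{n+1},\eta_{n+1})]$ using \eqref{ass:p-moments} with $p>2$ to control the error from the quadratic term and beyond — here one uses $\mu_i(x) = c_i/x + o(x^{-1})$, $\sigma_i^2(x) = s_i^2 + o(1)$, and $q_{ij}(x) \to q_{ij}$, together with the defining relation for $h_i$, to find
\[
\E_{x,i}[f(X_{n+1},\eta_{n+1})] - f(x,i) = 2s\, x^{2s-2}\Bigl( \bar c + (s - \tfrac12)\,\overline{s^2} + o(1) \Bigr),
\]
where $\overline{s^2} = \sum_i \pi_i s_i^2$ — wait, one must be careful that the $s_i^2$ also get averaged correctly, which requires the $h_i$-correction to interact properly with the drift term; (v) observe that the bracket is $\frac{1}{2}\sum_i[2c_i + (2s-1)s_i^2]\pi_i + o(1)$, which is $\le \frac{1}{2}\sum_i[2c_i+(2\theta-1)s_i^2]\pi_i + o(1) < 0$ for $s \le \theta$ (using $\sum_i \pi_i s_i^2 \ge 0$ to handle $s \le \theta$), hence negative for $x$ large; since $x^{2s-2} = f(x,i)^{1-1/s}(1+o(1))$, the semimartingale criterion applies and gives $\E[\tau_x^s] < \infty$ for $x$ large, for every $s \in [0, \theta \wedge \frac p2]$ (the constraint $s \le p/2$ is what lets the remainder terms in the Taylor expansion be controlled by \eqref{ass:p-moments}).

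The main obstacle I expect is step (iv): getting the line-dependent corrections $h_i$ to conspire so that, after averaging, only the \emph{stationary} combination $\sum_i \pi_i(2c_i + (2s-1)s_i^2)$ survives in the leading term, with genuinely $o(x^{2s-2})$ error. The subtlety is that $\E_{x,i}[f(X_{n+1},\eta_{n+1})]$ involves both the change in $X$ (governed by $\mu_i, \sigma_i^2$) and the change in $\eta$ (governed by $q_{ij}(x)$), and these are correlated; one must expand $f(X_{n+1},\eta_{n+1}) = (X_{n+1} + h_{\eta_{n+1}} + b)^{2s}$ keeping cross terms like $(X_{n+1}-x)(h_{\eta_{n+1}} - h_i)$, and check that the resulting extra contribution is $O(x^{2s-2})$ and either vanishes on averaging against $\bpi$ or can be absorbed. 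A clean way to manage this is to first prove the result along a reference line via the embedded-chain approach of~\cite{AW} and then transfer, but since the stated method is Lyapunov-function based, the honest route is the direct expansion; the bookkeeping of the $o(\cdot)$ terms, uniformly in $i \in S$ (finite, so uniformity is free) and using the $p>2$ moment bound to kill the tail of the Taylor remainder via a truncation at scale $x^{1/2}$ or similar, is where the real work lies. Finally, one should note that for $s \le 1$ the function $x \mapsto x^{2s}$ is concave-ish and the remainder estimates are slightly different from the $s>1$ case; handling $s \in (0,1]$ and $s>1$ may require two variants of the estimate, but both follow the same scheme.
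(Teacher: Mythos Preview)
Your overall strategy (line-dependent Lyapunov correction, Poisson-type equation, then the AIM semimartingale criterion) matches the paper's, but there is a genuine error in the form of the Lyapunov function, and it is not merely a bookkeeping issue. Both candidates you propose, $(x+h_i+b)^{2s}$ and $x^{2s}(1+2sh_i/x)$, place the correction one order below the leading term, i.e.\ at order $x^{2s-1}$. In the Lamperti regime $\mu_i(x)=c_i/x+o(1/x)$, the change in $\eta$ then contributes to the drift of $f$ a term of size $2s\sum_j(h_j-h_i)q_{ij}\,x^{2s-1}$; with your Poisson equation this equals $2s(c_i-\bar c)\,x^{2s-1}$, which is nonzero for some $i$ and of strictly higher order than the $x^{2s-2}$ terms coming from $\mu_i$ and $\sigma_i^2$. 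The display you claim in step~(iv) therefore cannot hold, and no amount of cross-term bookkeeping repairs it. (The shift $X_n\mapsto X_n+h_{\eta_n}$ is precisely the device the paper uses to reduce \emph{generalized} Lamperti drift to Lamperti drift; applying it within the Lamperti case moves you in the wrong direction, manufacturing a nonzero constant drift where there was none.)

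The paper's remedy is to place the correction two orders down: $f_\nu(x,i)=x^\nu+\tfrac{\nu}{2}b_i x^{\nu-2}$. Then the $b_i$-term contributes $\tfrac{\nu}{2}\sum_j(b_j-b_i)q_{ij}\,x^{\nu-2}$, the \emph{same} order as the $c_i$ and $s_i^2$ contributions, and one obtains
\[
\E_{x,i}\bigl[f_\nu(X_{n+1},\eta_{n+1})-f_\nu(x,i)\bigr]=\tfrac{\nu}{2}x^{\nu-2}\Bigl(2c_i+(\nu-1)s_i^2+\sum_{j\in S}(b_j-b_i)q_{ij}+o(1)\Bigr).
\]
This also resolves the worry you flag about $s_i^2$: one does not average $c_i$ and $s_i^2$ separately. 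Instead set $u_i:=2c_i+(\nu-1)s_i^2$ with $\nu=2\theta\wedge p$, so that hypothesis~\eqref{eqn:L2} gives $\sum_i\pi_i u_i<0$; the Fredholm alternative (used as an inequality, via a small uniform perturbation) then furnishes $b_i$ with $u_i+\sum_j(b_j-b_i)q_{ij}<0$ for \emph{every} $i$ simultaneously. The drift is thus uniformly $\le -\delta x^{\nu-2}$, and the AIM criterion with $\gamma=(\nu-2)/\nu$ yields $\E[\tau_x^s]<\infty$ for all $s\le\tfrac{\nu}{2}=\theta\wedge\tfrac{p}{2}$.
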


We have the following result in the other direction.
\begin{theorem}
\label{thm:L3}
Suppose that~\eqref{ass:basic} holds, and that~\eqref{ass:p-moments} holds for some $p>2$. Suppose also that~\eqref{ass:q-lim} and~\eqref{ass:drift-lamperti} hold.
If for some $\theta \in (0, \frac{p}{2}]$, 
\begin{equation}
\sum_{i \in S} \left[ 2c_i+(2\theta-1)s_i^2 \right] \pi_i > 0, 
\label{eqn:L3}
\end{equation}
then for any $s  \geq \theta$, we have
$\E [ \tau_x^s ] =\infty$ for all sufficiently large $X_0 > x$.
\end{theorem}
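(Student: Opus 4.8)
The plan is to reduce to $s=\theta$, build a Lyapunov function comparable to $x^{2\theta}$ which is a submartingale while $X_n$ is large, and then invoke a general criterion for non-existence of passage-time moments. For the reduction: since $\tau_x\ge 1$ whenever $X_0>x$, we have $\tau_x^s\ge\tau_x^\theta$ for $s\ge\theta$, so it suffices to show $\E[\tau_x^\theta]=\infty$; moreover the left-hand side of~\eqref{eqn:L3} is strictly increasing in $\theta$ (because $\pi_i>0$ for all $i$ and, by~\eqref{ass:drift-lamperti}, some $s_i^2>0$), while the inequality there is strict, so I may fix $\theta^*\in(0,\theta)$ with $\kappa:=\sum_{i\in S}\bigl[2c_i+(2\theta^*-1)s_i^2\bigr]\pi_i>0$; note $\beta:=2\theta^*<2\theta\le p$. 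Since $\theta>\theta^*$, it is then enough to prove $\E[\tau_x^{\theta^*}]=\infty$.

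\emph{The Lyapunov function.} Because $(q_{ij})$ is irreducible, the vector $g=(g_i)_{i\in S}$ with $g_i:=\theta^*\bigl(\kappa-2c_i-(2\theta^*-1)s_i^2\bigr)$ satisfies $\sum_i\pi_i g_i=0$, hence lies in the range of $Q-I$; fix $v=(v_i)_{i\in S}$ with $\sum_{j\in S}q_{ij}v_j-v_i=g_i$ for all $i$, and set $f(x,i):=x^\beta+v_i\,x^{\beta-2}$ for $x\ge 1$ (extended boundedly for $x<1$, and shifted by a constant so that $f\ge 0$; neither alteration affects the drifts below). Then $\tfrac12 x^\beta\le f(x,i)\le 2x^\beta$ for $x$ large. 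Expanding $(x+u)^\beta$ to second order and $(x+u)^{\beta-2}$ to first order, and using~\eqref{ass:drift-lamperti} (for $\mu_i,\sigma_i^2$),~\eqref{ass:q-lim} (for $q_{ij}(x)\to q_{ij}$), and~\eqref{ass:p-moments} with $p>2$ and $\beta<p$ (truncating at $\{|X_{n+1}-X_n|\le X_n/2\}$ to bound the Taylor remainders), one expects
\[
\E_{x,i}\bigl[f(X_{n+1},\eta_{n+1})\bigr]-f(x,i)
= x^{\beta-2}\Bigl(\theta^*\bigl(2c_i+(2\theta^*-1)s_i^2\bigr)+\sum_{j\in S}q_{ij}v_j-v_i\Bigr)+o(x^{\beta-2})
= \theta^*\kappa\,x^{\beta-2}+o(x^{\beta-2}),
\]
by the choice of $v$. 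Since $\kappa>0$, there is $x_0\ge 1$ such that $\E_{x,i}[f(X_{n+1},\eta_{n+1})-f(X_n,\eta_n)\mid X_n=x,\,\eta_n=i]\ge\tfrac12\theta^*\kappa\,x^{\beta-2}\ge 0$ whenever $(x,i)\in\Sigma$ with $x\ge x_0$. The elementary bound $|(x+u)^\beta-x^\beta|\le C(x^{\beta-1}|u|+|u|^\beta)$ (for $x\ge1$, $u\ge-x$) together with~\eqref{ass:p-moments} also furnish, for some $m>1$ (of order $p/\beta>1$) and a constant $C'$, a polynomially-bounded increment condition $\E_{x,i}[|f(X_{n+1},\eta_{n+1})-f(X_n,\eta_n)|^{m}\mid X_n=x,\,\eta_n=i]\le C'\,(1+f(x,i))^{m\gamma}$ for $x\ge x_0$, with $\gamma:=(1-1/\beta)_+$.

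\emph{Conclusion.} Fix a level $x\ge x_0$, put $W_n:=f(X_n,\eta_n)$ and $B:=\sup\{f(y,i):(y,i)\in\Sigma,\ y\le x\}<\infty$. Then $\{X_n\le x\}\subseteq\{W_n\le B\}$, so $\tau_x\ge\lambda_B:=\min\{n:W_n\le B\}$, and on $\{W_n>B\}$ one has $X_n>x\ge x_0$; hence $(W_{n\wedge\lambda_B})$ is a nonnegative submartingale, with $\E[W_{n+1}-W_n\mid\cF_n]\ge\tfrac12\theta^*\kappa\,X_n^{\beta-2}$ and the increment bound above — which, since $f(y,i)\asymp y^\beta$, translate into Lamperti-type drift and increment bounds for $W_n$ expressed in $W_n$ itself, with critical exponent $\beta/2=\theta^*$. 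A standard criterion for non-existence of passage-time moments of such $\RP$-valued processes (see e.g.~\cite{MM}) then gives $\E[\lambda_B^{\theta^*}]=\infty$, and hence $\E[\tau_x^{\theta^*}]=\infty$, whenever $X_0$ is large enough that $W_0>B$. For $x<x_0$ one uses $\tau_x\ge\tau_{x_0}$. This proves $\E[\tau_x^{\theta^*}]=\infty$, hence $\E[\tau_x^s]=\infty$ for every $s\ge\theta$ and every $x$, for all sufficiently large $X_0$.

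\emph{The main obstacle.} The crux is the construction in the second step, and in particular the \emph{order} at which the line-correction sits. Because no rate of convergence is assumed in~\eqref{ass:q-lim} or~\eqref{ass:drift-lamperti}, a correction $v_i\,x^{\beta-1}$ would contribute $x^{\beta-1}\bigl(\sum_j q_{ij}v_j-v_i\bigr)$ to the drift, which dominates the $x^{\beta-2}$ terms carrying the data on $c_i,s_i^2$ and cannot be made uniformly nonnegative unless $v$ is constant — in which case it is useless. Putting the correction at order $x^{\beta-2}$ places $\sum_j q_{ij}v_j-v_i$ at the same order as the main term, so the Poisson equation can cancel the line-to-line fluctuations of $2c_i+(2\theta^*-1)s_i^2$ against the $\pi$-average $\kappa$, while the $o(1)$ error in $q_{ij}(x)-q_{ij}$ and the $o(x^{-1})$ error in $\mu_i(x)-c_i/x$ are demoted to $o(x^{\beta-2})$, harmless against $\tfrac12\theta^*\kappa\,x^{\beta-2}$. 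The remaining work is more technical but routine: the Taylor remainder estimates when $p$ is only just above $2$ and $\beta$ is near $p$, and — in the regime $\theta>p/4$, where only a moment of order $p/\beta<2$ of the $W_n$-increments is available — verifying that the general non-existence criterion still yields the sharp exponent $\theta^*$; this is where the hypothesis $\theta\le p/2$ is used.
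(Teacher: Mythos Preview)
Your approach is essentially the same as the paper's: the Lyapunov function $f(x,i)=x^\beta+v_ix^{\beta-2}$ is precisely the paper's $f_\nu$ (with $\nu=\beta$, $b_i=2v_i/\beta$), your choice of $v$ via the Poisson equation is exactly how the paper's Lemma~4.2(ii) is proved, and your drift expansion is the content of the paper's Lemma~3.2; the reduction $\theta^*<\theta$ by continuity is used by the paper too, just at the end rather than the start. The one place the paper is more explicit is your ``standard criterion'' step: rather than invoking an unspecified result for $W_n$, the paper sets $Z_n:=W_n^{1/\nu}$, proves directly (Lemma~4.4) that $\E_{x,i}[(Z_{n+1}-Z_n)^2]\le B$ and $\E_{x,i}[Z_{n+1}-Z_n]\ge -C/x$, and then applies a concrete criterion (Lemma~4.3, a variant of Theorem~2 in~\cite{AIM}) requiring exactly these two bounds plus the submartingale property of $Z_n^{2\theta}=W_n$---this is where $\theta\le p/2$ enters, to keep the second-moment bound on $Z_n$-increments.
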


In the case where $S$ is a singleton, Theorems~\ref{thm:L2} and~\ref{thm:L3}
reduce to versions of Propositions~1 and~2, respectively, of~\cite{AIM} on passage-time
moments for Markov chains on $\RP$.

\subsection{Generalized Lamperti drift}
\label{ss:gld}

Now we turn to the main topic of this paper.
This case is the most subtle, and the asymptotic properties of the process depend not only on $\mu_i(x)$
and $\sigma_i^2(x)$ but also on
the quantities
\[
\mu_{ij}(x) :=\E_{x,i} \left[(X_{n+1}-X_n) \1 {\eta_{n+1}=j} \right] ;
\]
this alerts us to the fact that correlations between the components of the increments are now crucial.
The case of generalized Lamperti drift is the following: 
\begin{description}
\item
[\namedlabel{ass:drift-gl}{D$_\text{G}$}]
 For $i, j \in S$ there exist  $d_i \in \R$, $e_i \in \R$, $d_{ij} \in \R$ and $t^2_i \in \RP$, with at least one $t^2_i$ non-zero, such that  
\begin{itemize}
\item[(a)] for all $i \in S$, $\mu_i(x) = d_i+ \frac{e_i}{x} + o(x^{-1})$ as $x \to \infty$;
\item[(b)] for all $i \in S$, $\sigma^2_i(x) = t^2_i + o(1)$ as $x \to \infty$;
\item[(c)] for all $i,j \in S$, $\mu_{ij}(x) = d_{ij} + o(1)$ as $x \to \infty$; and
\item[(d)] $\sum_{i \in S} \pi_i d_i = 0$.
\end{itemize}
\end{description}
Note that necessarily we have the relation $d_i =\sum_{j \in S} d_{ij}$.

As in the Lamperti drift case, we need to have an additional condition at the phase boundary.
\begin{description}
\item
[\namedlabel{ass:drift-gl+}{D$_\text{G}^+$}]
There exist $\delta_2 \in (0,1)$, $d_i \in \R$, $e_i \in \R$, $d_{ij} \in \R$ and $t^2_i \in \RP$, with at least one $t^2_i$ non-zero, such that  
\begin{itemize}
\item[(a)] for all $i \in S$, $\mu_i(x) = d_i+ \frac{e_i}{x} + o(x^{-1-\delta_2})$ as $x \to \infty$;
\item[(b)] for all $i \in S$, $\sigma^2_i(x) = t^2_i + o(x^{-\delta_2})$ as $x \to \infty$; and
\item[(c)] for all $i,j \in S$, $\mu_{ij}(x) = d_{ij} + o(x^{-\delta_2})$ as $x \to \infty$.
\end{itemize}
\end{description}

We also must impose refined forms of the condition~\eqref{ass:q-lim}, where now further terms come into play.
\begin{description}
\item
[\namedlabel{ass:q-lim-gl}{Q$_\text{G}$}]
 For $i, j \in S$ there exist $\gamma_{ij} \in \R$ such that $q_{ij}(x)=q_{ij}+\frac{\gamma_{ij}}{x} +o(x^{-1})$, where $(q_{ij})$ is an irreducible stochastic matrix.
\item
[\namedlabel{ass:q-lim-gl+}{Q$_\text{G}^+$}]
 There exist $\delta_3 \in (0,1)$ and $\gamma_{ij} \in \R$ such that $q_{ij}(x)=q_{ij}+\frac{\gamma_{ij}}{x} +o(x^{-1-\delta_3})$. 
\end{description}
The fact that $\sum_{j \in S}q_{ij}(x)=1$ implies, after a calculation, that $\sum_{j \in S}\gamma_{ij}=0$ for all $i \in S$.

For understanding the statement of our recurrence classification in the generalized Lamperti case, we
need the following preliminary result on solutions $\ba = (a_1, \ldots, a_{|S|})^\tra$
to the system of equations
\begin{equation}
\label{e:d-system}
d_i+\sum_{j \in S}(a_j-a_i)q_{ij}=0, \text{ for all } i \in S ;
\end{equation}
we say that a solution $\ba = (a_1, \ldots, a_{|S|})^\tra$ is \emph{unique up to translation}
if all solutions $\ba' = (a'_1, \ldots, a'_{|S|})^\tra$ have $a'_j - a_j$ constant.
\begin{lemma}
\label{lem:L4}
Let $d_i \in \R$ and $(q_{ij})$ be an irreducible stochastic matrix with stationary distribution $\pi$ on $S$. Then the following statements are equivalent.
\begin{itemize}
\item $\sum_{i \in S}d_i \pi_i=0$.
\item There exists a solution $\ba = (a_1, \ldots, a_{|S|})^\tra$ to~\eqref{e:d-system} that is unique up to translation.
\end{itemize}
\end{lemma}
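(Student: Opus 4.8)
The plan is to reformulate the system~\eqref{e:d-system} in matrix-theoretic terms and apply the Fredholm alternative for the singular matrix $I - Q$, where $Q = (q_{ij})$. Writing $\bd = (d_1, \ldots, d_{|S|})^\tra$ and $\ba = (a_1, \ldots, a_{|S|})^\tra$, the $i$-th equation of~\eqref{e:d-system} reads $d_i + \sum_j q_{ij} a_j - a_i \sum_j q_{ij} = 0$, and since $Q$ is stochastic the last sum is $1$, so the system becomes $\bd + Q \ba - \ba = \0$, i.e.\ $(I - Q) \ba = \bd$. Because $Q$ is an irreducible stochastic matrix, standard Perron--Frobenius theory gives that $I - Q$ has a one-dimensional kernel spanned by $\mathbf{1} = (1, \ldots, 1)^\tra$ (the row sums of $Q$ are $1$), and likewise its transpose $I - Q^\tra$ has a one-dimensional kernel spanned by the stationary vector $\bpi$ (this is exactly $\bpi^\tra Q = \bpi^\tra$, $\bpi^\tra \mathbf{1} = 1$, the defining property recalled in the text). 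Consequently $\operatorname{rank}(I-Q) = |S| - 1$, and the range of $I - Q$ is the orthogonal complement of $\ker(I - Q^\tra) = \operatorname{span}(\bpi)$.

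With this in hand both implications are immediate. First suppose $\sum_i d_i \pi_i = 0$, i.e.\ $\bpi^\tra \bd = 0$; then $\bd \in \operatorname{span}(\bpi)^\per = \operatorname{range}(I - Q)$, so a solution $\ba$ of $(I-Q)\ba = \bd$ exists, and any two solutions differ by an element of $\ker(I - Q) = \operatorname{span}(\mathbf{1})$, which is precisely the statement that the solution is unique up to translation. Conversely, if a solution $\ba$ exists, then $\bd = (I - Q)\ba$ lies in $\operatorname{range}(I - Q) = \operatorname{span}(\bpi)^\per$, whence $\bpi^\tra \bd = \sum_i \pi_i d_i = 0$; here the "unique up to translation" hypothesis is not even needed for this direction, though it follows automatically once a solution exists since $\ker(I-Q)$ is one-dimensional.

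I do not anticipate a genuine obstacle; the only point requiring a little care is justifying $\dim \ker(I - Q) = 1$ and $\operatorname{range}(I-Q) = \ker(I-Q^\tra)^\per$. The first is the standard fact that an irreducible stochastic matrix has $1$ as a simple eigenvalue (Perron--Frobenius applied to the nonnegative irreducible matrix $Q$, whose spectral radius is $1$ attained at the eigenvector $\mathbf{1}$); the second is the elementary linear-algebra identity $\operatorname{range}(M) = \ker(M^\tra)^\per$ valid for any real square matrix $M$, together with the rank--nullity bookkeeping $\dim\ker(I-Q^\tra) = \dim\ker(I-Q) = 1$ (transpose preserves rank). Both are entirely standard, so the proof is short; one could alternatively give a self-contained probabilistic argument that $(I-Q)\ba = \bd$ is solvable iff $\bpi^\tra\bd = 0$ by summing the equations against $\bpi$ and invoking ergodicity of the finite chain $\eta^*_n$, but the Fredholm-alternative route is cleanest.
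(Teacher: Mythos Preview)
Your proof is correct and takes essentially the same approach as the paper: both rewrite~\eqref{e:d-system} as the linear system $(I-Q)\ba = \bd$ (the paper writes $(Q-I)\ba = -\bd$), apply the Fredholm alternative via $\ker(I-Q^\tra) = \operatorname{span}(\bpi)$ to characterize solvability as $\bpi^\tra\bd = 0$, and use $\ker(I-Q) = \operatorname{span}(\mathbf{1})$ for uniqueness up to translation. The only cosmetic difference is that the paper quotes the Fredholm alternative as a separate lemma, whereas you phrase it as the range--kernel identity $\operatorname{range}(M) = \ker(M^\tra)^\perp$ and invoke Perron--Frobenius explicitly for the one-dimensionality of the kernels.
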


Next we give our main recurrence classification for the model with generalized Lamperti drift. The criteria involve 
solutions to~\eqref{e:d-system}; as described in Lemma \ref{lem:L4} such solutions are not unique, but nevertheless the expressions in which they appear in 
Theorem~\ref{thm:L1}  are invariant under translations (see Remark~\ref{remarks}(c)), and so the statement makes sense.

\begin{theorem} 
\label{thm:L1}
Suppose that~\eqref{ass:basic} holds, and that~\eqref{ass:p-moments} holds for some $p>2$. Suppose also that~\eqref{ass:q-lim-gl} and~\eqref{ass:drift-gl} hold.
Define $\ba = (a_1, \ldots, a_{|S|})^\tra$ to be a solution to~\eqref{e:d-system}
whose existence is guaranteed by Lemma~\ref{lem:L4}. Define
\begin{equation}
\label{eq:UV}
 U := \sum_{i \in S} \left( 2 e_i + 2 \sum_{j \in S} a_j \gamma_{ij} \right) \pi_i , 
~\text{and}~
  V := \sum_{i \in S} \left( t_i^2 + 2 \sum_{j\in S} a_j d_{ij} \right) \pi_i .\end{equation}
Then the following classification applies.
\begin{itemize}
\item If $U > V$ then $(X_n,\eta_n)$ is transient.
\item If $|U| < V$ then $(X_n,\eta_n)$ is null-recurrent.
\item If $U < - V$ then $(X_n,\eta_n)$ is positive-recurrent.
\end{itemize}
If, in addition,~\eqref{ass:q-lim-gl+} and~\eqref{ass:drift-gl+} hold, 
then the following condition also applies (yielding an exhaustive classification):
\begin{itemize}
\item If $|U|= V$ then $(X_n,\eta_n)$ is null-recurrent.
\end{itemize}
\end{theorem}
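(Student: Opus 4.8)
The plan is to reduce the generalized Lamperti case to the (already available) Lamperti case by means of an explicit transformation of the $X$-coordinate, and then quote Theorems~\ref{thm:GW}, \ref{thm:L2} and~\ref{thm:L3} — or rather their recurrence parts — applied to the transformed process. Concretely, let $\ba$ be a solution to~\eqref{e:d-system} furnished by Lemma~\ref{lem:L4}, and consider the new process $Y_n := X_n + a_{\eta_n}$ (equivalently, work with the Lyapunov-type function $f(x,i) = x + a_i$ on the half strip). The point of the defining equation~\eqref{e:d-system} is exactly that it kills the leading-order constant drift: writing $\E_{x,i}[Y_{n+1} - Y_n] = \mu_i(x) + \sum_{j} (a_j - a_i) q_{ij}(x)$ and using \eqref{ass:drift-gl}(a), \eqref{ass:q-lim-gl} together with~\eqref{e:d-system}, one gets $\E_{x,i}[Y_{n+1} - Y_n] = \frac{1}{x}\bigl(e_i + \sum_j (a_j - a_i)\gamma_{ij}\bigr) + o(x^{-1})$, so $Y_n$ has a genuine Lamperti drift on each line. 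One must also compute the second moment parameter: $\E_{x,i}[(Y_{n+1}-Y_n)^2]$ should, after expanding $(X_{n+1}-X_n + a_{\eta_{n+1}} - a_i)^2$ and using \eqref{ass:drift-gl}(b),(c), converge to $s_i^2 := t_i^2 + 2\sum_j a_j d_{ij} - 2 a_i d_i + \sum_j (a_j - a_i)^2 q_{ij}$ (the $\mu_{ij}$ terms entering through the cross term $2(X_{n+1}-X_n)(a_{\eta_{n+1}}-a_i)$).

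The next step is bookkeeping: substitute these $c_i := e_i + \sum_j (a_j-a_i)\gamma_{ij}$ and the $s_i^2$ above into the Lamperti criterion $\sum_i (2c_i \pm s_i^2)\pi_i$ and check that the $a_i$-dependent extra terms collapse. The $\pi$-weighted sums of the `new' pieces — namely $\sum_i \pi_i \sum_j(a_j-a_i)\gamma_{ij}$ on the drift side and $\sum_i \pi_i\bigl(-2a_i d_i + \sum_j(a_j-a_i)^2 q_{ij}\bigr)$ on the variance side — need to be simplified using $\sum_i \pi_i q_{ij} = \pi_j$, $\sum_j \gamma_{ij}=0$, $\sum_i \pi_i d_i = 0$, and crucially \eqref{e:d-system} itself (which lets one rewrite $\sum_j(a_j-a_i)q_{ij} = -d_i$). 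A short computation of this kind should show $\sum_i (2c_i - s_i^2)\pi_i = U - V$ and $\sum_i (2c_i + s_i^2)\pi_i = U + V$ with $U,V$ as in~\eqref{eq:UV}, so that the three strict inequalities of Theorem~\ref{thm:GW} for $Y_n$ are precisely $U > V$, $|U| < V$, $U < -V$. Since $X_n \to \infty$ if and only if $Y_n \to \infty$ (the two differ by the bounded quantity $a_{\eta_n}$) and positive-recurrence, null-recurrence and transience are all preserved, the classification transfers verbatim.

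For the borderline case $|U| = V$, the same transformation is applied, but now one must verify that the strengthened hypotheses \eqref{ass:drift-gl+} and \eqref{ass:q-lim-gl+} for $X_n$ imply the strengthened hypotheses \eqref{ass:drift-lamperti+} and \eqref{ass:q-lim+} for $Y_n$ — i.e. that the $o(x^{-1-\delta})$ and $o(x^{-\delta})$ error rates survive the transformation. This is essentially immediate since $a_{\eta_{n+1}} - a_i$ is a bounded increment and the transformation is affine, so $\delta_0$ (resp.\ $\delta_1$) can be taken to be $\min(\delta_2,\delta_3)$; then the final bullet of Theorem~\ref{thm:GW} gives null-recurrence of $Y_n$, hence of $X_n$. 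The main obstacle is the translation-invariance bookkeeping in the middle paragraph: one has to be careful that the cross-term $2\sum_j a_j d_{ij}$ in $V$ and the $2\sum_j a_j \gamma_{ij}$ in $U$ are exactly what emerges after all the $q$-weighted sums are resolved, and that the genuinely $\ba$-dependent remainder vanishes after $\pi$-averaging — this is where Remark~\ref{remarks}(c) (invariance under $a_i \mapsto a_i + \text{const}$) is the sanity check. The transfer of recurrence/transience itself is routine once $Y_n$ is identified as a Lamperti-drift process.
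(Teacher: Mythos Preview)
Your proposal is correct and follows essentially the same route as the paper: transform via $Y_n = X_n + a_{\eta_n}$ with $\ba$ solving~\eqref{e:d-system}, verify that $(Y_n,\eta_n)$ satisfies the Lamperti drift hypotheses with the $c_i$ and $s_i^2$ you compute (these agree with the paper's expressions after using~\eqref{e:d-system}), apply Theorem~\ref{thm:GW}, and check that $\sum_i(2c_i\pm s_i^2)\pi_i = U\pm V$ after the $\pi$-weighted cancellations. The only minor point you omit is that one should translate $\ba$ so that $a_i\ge 0$ for all $i$, ensuring the transformed state space remains in $\RP\times S$; this is routine given the translation freedom from Lemma~\ref{lem:L4}.
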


As in Section~\ref{ss:ld}, we quantify the degree of recurrence by establishing 
existence and non-existence of moments of the passage times $\tau_x$. First we give
conditions for existence of moments.
\begin{theorem}
\label{thm:L4}
Suppose that~\eqref{ass:basic} holds, and that~\eqref{ass:p-moments} holds for some $p>2$. Suppose also that~\eqref{ass:q-lim-gl} and~\eqref{ass:drift-gl} hold.
Define $\ba = (a_1, \ldots, a_{|S|})^\tra$ to be a solution to~\eqref{e:d-system}
whose existence is guaranteed by Lemma~\ref{lem:L4}.
If for some $\theta >0$, with $U$ and $V$ as given by~\eqref{eq:UV},
\begin{equation}
U + (2\theta -1 ) V < 0,
%\sum_{i \in S} \biggl[ 2e_i+(2\theta-1)t_i^2 + 2\sum_{j \in S}a_j(\gamma_{ij}+(2\theta-1)d_{ij}) \biggr] \pi_i<0, \label{eqn:L4}
\end{equation}
then for any $s \in \left[0,\theta \wedge \frac{p}{2} \right]$, we have
$\E[\tau_x^s]<\infty$ for all $x$ sufficiently large.
\end{theorem}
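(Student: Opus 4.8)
The plan is to reduce the generalized Lamperti case to the Lamperti case already treated in Theorem~\ref{thm:L2}, via the transformation advertised in the introduction. Concretely, with $\ba = (a_1,\dots,a_{|S|})^\tra$ a solution to~\eqref{e:d-system}, I would introduce the transformed coordinate $Y_n := X_n + a_{\eta_n}$ (possibly shifted by a constant so that $Y_n \ge 0$, using that $a$ is bounded on the finite set $S$), and study the chain $(Y_n, \eta_n)$ on a suitable half-strip. The point of the choice~\eqref{e:d-system} is precisely that the constant part of the drift cancels:
\[
\E_{x,i}[Y_{n+1} - Y_n] = \mu_i(x) + \sum_{j \in S} (a_j - a_i) q_{ij}(x) = d_i + \sum_{j\in S}(a_j-a_i)q_{ij} + O(x^{-1}) + o(1),
\]
and the $O(1)$ term vanishes by~\eqref{e:d-system}. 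I would then compute, under~\eqref{ass:q-lim-gl} and~\eqref{ass:drift-gl}, that $(Y_n,\eta_n)$ has Lamperti drift in the sense of~\eqref{ass:drift-lamperti}: the drift on line $i$ is $\tilde c_i / x + o(x^{-1})$ where $\tilde c_i := e_i + \sum_{j\in S} a_j \gamma_{ij}$ (the $1/x$ correction picks up the $\gamma_{ij}/x$ terms from $q_{ij}(x)$ together with the $e_i/x$ term from $\mu_i(x)$), and the second moment is $\tilde s_i^2 + o(1)$ where $\tilde s_i^2 := \E_{x,i}[(X_{n+1}-X_n + a_{\eta_{n+1}} - a_i)^2] \to t_i^2 + 2\sum_{j\in S} a_j(d_{ij} - d_i a_i/\cdot) + \dots$; more carefully, expanding the square and using $d_i = \sum_j d_{ij}$ together with $\mu_{ij}(x)\to d_{ij}$ and $q_{ij}(x)\to q_{ij}$, the limit is $t_i^2 + 2\sum_{j\in S}(a_j - a_i)d_{ij} + \sum_{j\in S}(a_j-a_i)^2 q_{ij}$. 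Summing against $\pi$ and using stationarity of $\pi$ (so that $\sum_i\pi_i\sum_j(a_j-a_i)^2 q_{ij}$ and the cross terms recombine), I expect $\sum_i \pi_i(2\tilde c_i) = U$ and $\sum_i \pi_i \tilde s_i^2 = V$, with $U,V$ as in~\eqref{eq:UV}; this identification is the linear-algebra heart of the argument and uses $\sum_j \gamma_{ij} = 0$ and~\eqref{e:d-system} repeatedly.

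Granting this, I would verify that $(Y_n,\eta_n)$ satisfies hypotheses~\eqref{ass:basic}, \eqref{ass:p-moments} for the same $p>2$ (immediate, since $|Y_{n+1}-Y_n| \le |X_{n+1}-X_n| + 2\max_k|a_k|$), and~\eqref{ass:q-lim} (unchanged, as the $\eta$-marginal dynamics are untouched), and~\eqref{ass:drift-lamperti} as just computed. The condition $U + (2\theta-1)V < 0$ then reads exactly as~\eqref{eqn:L2} for the transformed chain with its $\tilde c_i, \tilde s_i^2$, so Theorem~\ref{thm:L2} yields $\E[\tilde\tau_x^s] < \infty$ for $s \in [0, \theta\wedge \tfrac p2]$ and all large $x$, where $\tilde\tau_x := \min\{n\ge 0 : Y_n \le x\}$ is the passage time for $Y$. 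Finally I would transfer this back to $\tau_x = \min\{n\ge 0: X_n \le x\}$: since $|Y_n - X_n| \le \max_k |a_k| =: A$, we have $\{X_n \le x\} \supseteq \{Y_n \le x - A\}$ and $\{X_n \le x\} \subseteq \{Y_n \le x + A\}$, so $\tilde\tau_{x+A} \le \tau_x \le \tilde\tau_{x-A}$, and in particular $\tau_x \le \tilde\tau_{x-A}$ gives $\E[\tau_x^s] \le \E[\tilde\tau_{x-A}^s] < \infty$ for all $x$ large enough that $x - A$ is in the valid range for Theorem~\ref{thm:L2}.

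The main obstacle I anticipate is the bookkeeping in the second step: correctly computing the $o(x^{-1})$-accurate drift and the $o(1)$-accurate second moment of the transformed chain, and then verifying algebraically that summing these against $\pi$ reproduces exactly $U$ and $V$ from~\eqref{eq:UV}. In particular one must be careful that cross terms of the form $\sum_{i,j}\pi_i(a_j-a_i)\,(\text{something})$ collapse correctly; the identity $\sum_i \pi_i \sum_j (a_j - a_i) q_{ij} = \sum_j a_j \pi_j - \sum_i a_i \pi_i = 0$ and its weighted analogues will be used, as will $d_i = \sum_j d_{ij}$ and $\sum_j \gamma_{ij} = 0$. A secondary technical point is ensuring the transformed state space is a genuine locally finite half-strip with all lines unbounded — this follows since $a$ is a bounded perturbation, though one may need to restrict attention to $Y_n \ge $ some constant, which is harmless for the passage-time statement. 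Everything else (moment transfer, the passage-time inequality $\tilde\tau_{x+A}\le\tau_x\le\tilde\tau_{x-A}$, and invoking Theorem~\ref{thm:L2}) is routine.
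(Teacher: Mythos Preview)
Your proposal is correct and follows essentially the same route as the paper: transform via $Y_n = X_n + a_{\eta_n}$ (the paper's Lemma~\ref{lem:transform}), verify that the transformed chain has Lamperti drift with parameters whose $\pi$-averages recover $U$ and $V$ (the same algebra appears in the paper's proof of Theorem~\ref{thm:L1}), and then invoke Theorem~\ref{thm:L2}. Your anticipated bookkeeping---that $\sum_{i,j}\pi_i(a_j^2-a_i^2)q_{ij}=0$ by stationarity and that $d_i+\sum_j(a_j-a_i)q_{ij}=0$ kills the remaining cross terms---is exactly what the paper does, and your passage-time comparison $\tau_x\le\tilde\tau_{x-A}$ is a clean way to finish.
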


Finally, we give conditions for non-existence of moments. 
\begin{theorem}
\label{thm:L5}
Suppose that~\eqref{ass:basic} holds, and that~\eqref{ass:p-moments} holds for some $p>2$. Suppose also that~\eqref{ass:q-lim-gl} and~\eqref{ass:drift-gl} hold.
Define $\ba = (a_1, \ldots, a_{|S|})^\tra$ to be a solution to~\eqref{e:d-system}
whose existence is guaranteed by Lemma~\ref{lem:L4}.
If for some $\theta \in (0, \frac{p}{2}]$,  with $U$ and $V$ as given by~\eqref{eq:UV},
\begin{equation}
U + ( 2 \theta - 1 ) V > 0,
%\sum_{i \in S} \biggl[ 2e_i+(2\theta-1)t_i^2 + 2\sum_{j \in S}a_j(\gamma_{ij}+(2\theta-1)d_{ij}) \biggr] \pi_i > 0, \label{eqn:L5}
\end{equation}
then for any $s \geq \theta$, we have $\E[\tau_x^s]=\infty$ for all sufficiently large $X_0 > x$.
\end{theorem}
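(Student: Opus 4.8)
The plan is to reduce Theorem~\ref{thm:L5} to the Lamperti-drift non-existence result, Theorem~\ref{thm:L3}, via the transformation foreshadowed in the introduction. Let $\ba = (a_1,\ldots,a_{|S|})^\tra$ solve~\eqref{e:d-system}; such a solution exists by Lemma~\ref{lem:L4} (the hypothesis $\sum_i \pi_i d_i = 0$ being part of~\eqref{ass:drift-gl}), and is unique up to translation, so I normalise it so that $\min_{i \in S} a_i = 0$. Put $Y_n := X_n + a_{\eta_n}$ and $A := \max_{i \in S} a_i$, so that $0 \le Y_n - X_n \le A$ for all $n$. The map $\phi(x,i) := (x + a_i, i)$ is a bijection from $\Sigma$ onto the locally finite set $\Sigma' := \phi(\Sigma) \subseteq \RP \times S$ (with unbounded lines) that conjugates the transition kernel of $(X_n,\eta_n)$ to that of $(Y_n,\eta_n)$; hence $(Y_n,\eta_n)$ is a time-homogeneous irreducible Markov chain, it inherits~\eqref{ass:p-moments} (with the same $p > 2$) from $|Y_{n+1}-Y_n| \le |X_{n+1}-X_n| + 2A$, and its $\eta$-transition probabilities $q^Y_{ij}(y) = q_{ij}(y - a_i) \to q_{ij}$ satisfy~\eqref{ass:q-lim} with the same matrix $(q_{ij})$ and stationary vector $\bpi$.

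The heart of the matter is to check that $(Y_n,\eta_n)$ satisfies~\eqref{ass:drift-lamperti} with parameters for which the criterion of Theorem~\ref{thm:L3} matches the hypothesis of Theorem~\ref{thm:L5}. Conditioning on $(Y_n,\eta_n) = (y,i)$ is conditioning on $(X_n,\eta_n) = (y-a_i,i)$, so the drift of $Y$ on line $i$ is $\mu_i(y-a_i) + \sum_{j}(a_j-a_i)q_{ij}(y-a_i)$; inserting $\mu_i(x) = d_i + \tfrac{e_i}{x} + o(x^{-1})$ from~\eqref{ass:drift-gl} and $q_{ij}(x) = q_{ij} + \tfrac{\gamma_{ij}}{x} + o(x^{-1})$ from~\eqref{ass:q-lim-gl}, the constant term vanishes by~\eqref{e:d-system}, and (since $\sum_j \gamma_{ij} = 0$) the coefficient of $1/y$ is $c'_i := e_i + \sum_j a_j \gamma_{ij}$, whence $\sum_i 2 c'_i \pi_i = U$. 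Expanding $(Y_{n+1}-Y_n)^2$ and using~\eqref{ass:drift-gl}(b)--(c) and $d_i = \sum_j d_{ij}$ gives the second moment of the $Y$-increment on line $i$ tending to $(s'_i)^2 := t_i^2 + 2\sum_j (a_j-a_i) d_{ij} + \sum_j (a_j-a_i)^2 q_{ij} \ge 0$; a short computation using the stationarity identity $\sum_i \pi_i q_{ij} = \pi_j$ together with~\eqref{e:d-system} yields $\sum_i \pi_i \sum_j (a_j-a_i)^2 q_{ij} = 2 \sum_i \pi_i a_i d_i$, and hence $\sum_i (s'_i)^2 \pi_i = \sum_i \pi_i ( t_i^2 + 2 \sum_j a_j d_{ij} ) = V$. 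Thus~\eqref{ass:drift-lamperti} holds for $(Y_n,\eta_n)$ with $c'_i, (s'_i)^2$, and the assumption $U + (2\theta-1)V > 0$ is exactly~\eqref{eqn:L3} for $Y$.

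It then remains to apply Theorem~\ref{thm:L3} to $(Y_n,\eta_n)$ and pull the conclusion back to $X$. Fix $x$ and set $\sigma_y := \min\{n \ge 0 : Y_n \le y\}$. Since $X_n \le x$ forces $Y_n \le x + A$, we get $\sigma_{x+A} \le \tau_x$, hence $\E[\tau_x^s] \ge \E[\sigma_{x+A}^s]$ started from any state; by Theorem~\ref{thm:L3} (applicable with the same $\theta \in (0,p/2]$), the right-hand side is infinite for every $s \ge \theta$ once $Y_0$ is large enough, and since $Y_0 \ge X_0$ this holds once $X_0$ is large enough, which is the claim. The main labour is the bookkeeping in the middle paragraph, in particular the identity $\sum_i (s'_i)^2 \pi_i = V$, which is precisely where the stationarity of $\bpi$ and the defining system~\eqref{e:d-system} for $\ba$ have to interlock; the one further point needing attention is the degenerate case $V = 0$, where~\eqref{ass:drift-lamperti} fails for $Y$, but then the hypothesis reduces to $U > 0$ and a direct argument (comparing $Y$ with a deterministic recursion, or perturbing the increments by an arbitrarily small independent noise, which changes $V$ but not $U$) covers it.
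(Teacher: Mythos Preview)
Your approach is essentially the paper's own: the paper's proof of Theorem~\ref{thm:L5} is the one-liner ``apply Theorem~\ref{thm:L3} to the transformed process'', relying on Lemma~\ref{lem:transform} for the increment-moment computations of $(\tX_n,\eta_n)$ and on the calculation already done in the proof of Theorem~\ref{thm:L1} for the identity $\sum_i (s'_i)^2\pi_i = V$. You have correctly reproduced these details (your identity $\sum_i \pi_i \sum_j (a_j-a_i)^2 q_{ij} = 2\sum_i \pi_i a_i d_i$ is just the paper's observation that $\sum_{i,j}(a_j^2-a_i^2)q_{ij}\pi_i=0$ combined with~\eqref{e:d-system}), and your explicit passage-time inequality $\sigma_{x+A}\le\tau_x$ is a point the paper leaves implicit.

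The one place where your write-up falls short is the degenerate case $V=0$. Your two suggested fixes are not rigorous as stated: ``comparing $Y$ with a deterministic recursion'' is vague, and ``perturbing the increments by an arbitrarily small independent noise'' changes the process, after which you would still need an argument to transfer non-existence of moments back to the unperturbed chain. The clean resolution is much simpler: just observe that the non-degeneracy clause ``at least one $s_i^2$ non-zero'' in~\eqref{ass:drift-lamperti} is never actually used in the \emph{proof} of Theorem~\ref{thm:L3} (Lemmas~\ref{lem:calf}, \ref{lem:L2}(ii), \ref{lem:z1}, and~\ref{thm:nomo} all go through verbatim with $s_i^2\equiv 0$), so that theorem applies to $(Y_n,\eta_n)$ regardless. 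The paper itself does not flag this edge case at all.
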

\begin{remarks}
\label{remarks}
\begin{enumerate}[label=(\alph*),leftmargin=0pt,itemindent=20pt,nosep]
\item The generalization of the state-space $\Sigma$ from $\ZP \times S$ considered in~\cite{AW} and previous work is not merely for the sake of generalization; it is necessary
for the technical approach of this paper, whereby we find a transformation $\phi : \Sigma \to \Sigma'$ such that if $(X_n,\eta_n)$ has generalized Lamperti drift,
then $\phi (X_n, \eta_n)$ has Lamperti drift (i.e., the constant components of the drifts are eliminated). We then apply the results of Section~\ref{ss:ld}
to deduce the results in Section~\ref{ss:gld}. Even if $\Sigma = \ZP \times S$, the state-space $\Sigma'$ obtained after the transformation $\phi$ will not be
(lines are translated in a certain way).
\item The local finiteness assumption ensures that transience of the Markov chain $(X_n, \eta_n)$ is equivalent to $\lim_{n \to \infty} X_n = +\infty$, a.s., and hence
all our conditions on $\mu_i(x)$ etc.~are asymptotic conditions as $x \to \infty$.
\item As mentioned above, the non-uniqueness of solutions to~\eqref{e:d-system} is not a problem for the statement of the theorems
in this section, because the quantities in our conditions are unchanged under translation of the $a_i$. Indeed, Lemma~\ref{lem:L4} shows that 
if $(a_i,i \in S)$ is a solution then so is $(c+a_i, i \in S)$ for any $c \in \R$, and, furthermore, every solution is of this form. 
Moreover, 
the facts that $\sum_{j \in S}\gamma_{ij}=0$ and $\sum_{i \in S}\sum_{j \in S}d_{ij}\pi_i=\sum_{i \in S}d_i\pi_i=0$ guarantee that replacing every $a_i$ by $c+a_i$
does not change the quantities $U$ and $V$ given by~\eqref{eq:UV}.    
\end{enumerate}
\end{remarks}

\section{Lyapunov function estimates}

Our analysis is based on the Lyapunov function $f_\nu : \Sigma \to (0,\infty)$ defined for $\nu \in \R$ by
\begin{equation*}
f_\nu(x, i) :=
\begin{cases}
 x^\nu+\frac{\nu}{2} b_{i} x^{\nu-2} & \text{if } x \ge x_0,\\
x_0^\nu+\frac{\nu}{2} b_{i} x_0^{\nu-2} & \text{if } x < x_0,
\end{cases}
\end{equation*}
where 
$b_i \in \R$ and $x_0 := 1+ \sqrt{|\nu|  \max_{i \in S}|b_{i}| }$.

The following straightforward result, whose proof is omitted, establishes bounds on~$f_\nu$.
\begin{lemma}
\label{cal21}
Suppose $\nu \in \R$. There exist positive constants $k_1,k_2 \in (0, \infty)$,
 depending on $\nu$  and $(b_i, i \in S)$, such that 
\[
k_1 (1+x)^\nu \le f_\nu(x,i) \le k_2(1+x)^\nu, \text{ for all } x \geq 0 \text{ and all } i \in S.
\]
\end{lemma}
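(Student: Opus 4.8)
The plan is to split the argument according to whether $x \ge x_0$ or $0 \le x < x_0$. For $x \ge x_0$ the function $f_\nu(\cdot,i)$ is a small multiplicative perturbation of $x^\nu$, so one only needs to check that the term $\tfrac{\nu}{2}b_i x^{-2}$ is genuinely negligible there, and then compare $x^\nu$ with $(1+x)^\nu$; for $0 \le x < x_0$ the function is constant in $x$ (and there are only finitely many values, one per $i \in S$), while $(1+x)^\nu$ ranges over a compact subset of $(0,\infty)$, so the bound is immediate. Taking the worse of the two pairs of constants then gives the claim, and along the way one also verifies $f_\nu > 0$, which is what justifies writing $f_\nu : \Sigma \to (0,\infty)$.

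For $x \ge x_0$, write $f_\nu(x,i) = x^\nu\bigl(1 + \tfrac{\nu}{2} b_i x^{-2}\bigr)$. Setting $M := |\nu| \max_{i \in S} |b_i| \ge 0$, the choice $x_0 = 1 + \sqrt{M}$ gives $x_0^2 = 1 + 2\sqrt{M} + M \ge M$, so for $x \ge x_0$ we have $\bigl| \tfrac{\nu}{2} b_i x^{-2} \bigr| \le \tfrac{M}{2 x^2} \le \tfrac{M}{2 x_0^2} \le \tfrac12$, whence $\tfrac12 x^\nu \le f_\nu(x,i) \le \tfrac32 x^\nu$ for all $x \ge x_0$ and all $i \in S$; in particular $f_\nu(x,i) > 0$ here. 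Moreover $x_0 \ge 1$, so for $x \ge x_0$ one has $x \le 1 + x \le 2x$ and hence $2^{-|\nu|} x^\nu \le (1+x)^\nu \le 2^{|\nu|} x^\nu$ (valid for either sign of $\nu$). Combining the two displays yields $k_1 (1+x)^\nu \le f_\nu(x,i) \le k_2 (1+x)^\nu$ on $\{x \ge x_0\}$ with, say, $k_1 = \tfrac12 2^{-|\nu|}$ and $k_2 = \tfrac32 2^{|\nu|}$, constants depending only on $\nu$.

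For $0 \le x < x_0$, the function $f_\nu(\cdot, i)$ equals the constant $f_\nu(x_0,i) = x_0^\nu + \tfrac{\nu}{2} b_i x_0^{\nu-2}$, which by the previous paragraph (applied at $x = x_0$) lies in $[\tfrac12 x_0^\nu, \tfrac32 x_0^\nu] \subset (0,\infty)$; since $S$ is finite, $\min_{i \in S} f_\nu(x_0,i)$ and $\max_{i \in S} f_\nu(x_0,i)$ are both finite and strictly positive. On the other hand, as $x$ runs over $[0,x_0)$ the quantity $(1+x)^\nu$ stays in the compact interval with endpoints $1$ and $(1+x_0)^\nu$, a subset of $(0,\infty)$ bounded away from $0$ and from $\infty$. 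Dividing gives positive constants $k_1', k_2'$ with $k_1' (1+x)^\nu \le f_\nu(x,i) \le k_2' (1+x)^\nu$ on $\{0 \le x < x_0\}$. Replacing $k_1$ by $\min(k_1, k_1')$ and $k_2$ by $\max(k_2, k_2')$ finishes the proof. There is no real obstacle: the only thing to be careful about is that the definition of $x_0$ does force the perturbation term below $\tfrac12$, and the degenerate cases are harmless ($\nu = 0$ gives $f_\nu \equiv 1$, and $\max_i |b_i| = 0$ gives $x_0 = 1$ and $f_\nu(x,i) = x^\nu$ on $\{x \ge 1\}$, both handled by the same estimates).
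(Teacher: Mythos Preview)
Your proof is correct. The paper actually omits the proof of this lemma entirely, calling it ``straightforward'', and your argument supplies exactly the natural verification: bound the perturbation factor $1+\tfrac{\nu}{2}b_i x^{-2}$ away from $0$ using the definition of $x_0$, compare $x^\nu$ with $(1+x)^\nu$ on $\{x\ge x_0\}\subset\{x\ge1\}$, and handle $0\le x<x_0$ by compactness.
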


The next result, which is central to what follows, provides increment moment estimates for our Lyapunov function
in the Lamperti drift case.

\begin{lemma}
\label{lem:calf}
Suppose that~\eqref{ass:basic} holds, and that~\eqref{ass:p-moments} holds for some $p>2$. Suppose also that~\eqref{ass:q-lim} and~\eqref{ass:drift-lamperti} hold.
Then for any $\nu \in (2-p, p]$, we have, as $x \to \infty$,
\begin{align}
\label{e:lyapunov1}
& {} 
\E_{x,i} \left[  f_\nu ( X_{n+1}, \eta_{n+1} ) - f_\nu (X_n, \eta_n) \right] 
\nonumber\\
  &  \qquad {}
= \frac{\nu}{2} x^{\nu-2} \left( 2 c_i + (\nu-1) s_i^2 + \sum_{j \in S} (b_j - b_i) q_{ij} + o(1) \right) 
 .\end{align}
\end{lemma}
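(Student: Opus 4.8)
The plan is to expand $\E_{x,i}[f_\nu(X_{n+1},\eta_{n+1})-f_\nu(X_n,\eta_n)]$ directly using the definition of $f_\nu$, treating the $\RP$-part and the $S$-part of the increment separately. For $x$ large (certainly $x \ge x_0$, and with $X_{n+1} \ge x_0$ with high probability thanks to the moment bound~\eqref{ass:p-moments}) we write $f_\nu(X_{n+1},\eta_{n+1}) = X_{n+1}^\nu + \frac{\nu}{2} b_{\eta_{n+1}} X_{n+1}^{\nu-2}$, and similarly for the starting point. First I would handle the contribution of the event $\{X_{n+1} < x_0\}$: since $\Sigma$ is locally finite and the chain is irreducible, and $\E_{x,i}|X_{n+1}-X_n|^p \le C_p$, this event has probability $O(x^{-p})$ and contributes $o(x^{\nu-2})$ uniformly, so it can be discarded. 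The genuine work is then a Taylor expansion of $X_{n+1}^\nu$ and $X_{n+1}^{\nu-2}$ around $x$.

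The main term. Write $\Delta := X_{n+1}-X_n$. On $\{\eta_{n+1}=j\}$ we have (heuristically) $X_{n+1}^\nu = x^\nu + \nu x^{\nu-1}\Delta + \frac{\nu(\nu-1)}{2} x^{\nu-2}\Delta^2 + (\text{error})$, and $\frac{\nu}{2} b_j X_{n+1}^{\nu-2} = \frac{\nu}{2} b_j x^{\nu-2} + (\text{lower order})$. Taking $\E_{x,i}$ and subtracting $f_\nu(x,i) = x^\nu + \frac{\nu}{2} b_i x^{\nu-2}$, the $x^\nu$ terms cancel, the linear term gives $\nu x^{\nu-1}\mu_i(x) = \nu x^{\nu-1}(c_i/x + o(x^{-1})) = \nu c_i x^{\nu-2} + o(x^{\nu-2})$ by~\eqref{ass:drift-lamperti}, the quadratic term gives $\frac{\nu(\nu-1)}{2} x^{\nu-2}\sigma_i^2(x) = \frac{\nu(\nu-1)}{2} s_i^2 x^{\nu-2} + o(x^{\nu-2})$, and the $b_j$--$b_i$ terms combine to $\frac{\nu}{2} x^{\nu-2}\bigl(\sum_{j\in S}(b_j-b_i) q_{ij}(x)\bigr) = \frac{\nu}{2} x^{\nu-2}\bigl(\sum_{j\in S}(b_j-b_i) q_{ij} + o(1)\bigr)$, using~\eqref{ass:q-lim}. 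Collecting and pulling out the common factor $\frac{\nu}{2} x^{\nu-2}$ yields $2c_i + (\nu-1)s_i^2 + \sum_{j\in S}(b_j-b_i)q_{ij} + o(1)$, which is exactly~\eqref{e:lyapunov1}.

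The hard part will be controlling the Taylor remainders rigorously when $\nu$ is not an integer and $\Delta$ is not bounded, only $p$-integrable for $p>2$; this is precisely why the hypothesis $\nu \in (2-p, p]$ appears. Here I would split the expectation over $\{|\Delta| \le x/2\}$ and $\{|\Delta| > x/2\}$. On the former, $X_{n+1} \in [x/2, 3x/2]$, so the third-order Taylor remainder for $t \mapsto t^\nu$ is bounded by $C|\Delta|^3 x^{\nu-3}$ (with $C$ depending on $\nu$), and $\E_{x,i}[|\Delta|^3 \1{|\Delta|\le x/2}] \le \E_{x,i}[|\Delta|^p] (x/2)^{3-p} \le C_p (x/2)^{3-p}$ when $p < 3$ (and is $O(x)$-type bounded when $p \ge 3$), giving a contribution $o(x^{\nu-2})$ in all cases; the cross terms involving $b_j x^{\nu-2}$ need only a first-order expansion with remainder $O(|\Delta| x^{\nu-3})$, again negligible. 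On the event $\{|\Delta| > x/2\}$, both $X_{n+1}^\nu$ and $x^\nu$ and the $b$-correction terms are dominated by $C(1+|\Delta|)^{|\nu|} + Cx^{|\nu|}$ (using $f_\nu \le k_2(1+x)^\nu$ from Lemma~\ref{cal21} when $\nu \ge 0$, and a direct bound when $\nu < 0$ since then $X_{n+1}^\nu \le (x_0)^\nu$ is bounded), so by Hölder/Markov the contribution is at most $C x^{|\nu|}\Pr_{x,i}[|\Delta|>x/2] + C\,\E_{x,i}[|\Delta|^{p}]\cdot(\text{negligible powers}) = O(x^{|\nu|-p}) + o(x^{\nu-2})$, which is $o(x^{\nu-2})$ exactly because $|\nu| < p$ forces $|\nu| - p < \nu - 2$ after checking the two sign cases for $\nu$. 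Assembling the two regimes and the $\{X_{n+1}<x_0\}$ correction completes the estimate. I would also note that the $o(\cdot)$ terms are uniform in $i \in S$ since $S$ is finite.
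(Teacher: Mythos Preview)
Your approach is correct and follows the same strategy as the paper: Taylor-expand $f_\nu$ on a truncation event, then show the tail contribution is negligible via the $p$-th moment bound. The only substantive difference is the truncation level: the paper truncates at $|\Delta_n|\le X_n^\zeta$ for suitable $\zeta\in(0,1)$, which lets the cubic remainder be controlled using only the \emph{second} moment (via $|\Delta|^3\le |\Delta|^2 X_n^\zeta$, giving $O(x^{\nu-3+\zeta})$), whereas your truncation at $|\Delta|\le x/2$ forces you to invoke the $p$-th moment and split on $p\gtrless 3$. Both routes work and yield the same conclusion; the paper's $x^\zeta$ choice is the standard one in the Lamperti literature and avoids the case split.

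One small slip to fix in your write-up: your combined tail estimate $O(x^{|\nu|-p})$ does not match what your parenthetical argument actually gives when $\nu<0$. There you (correctly) note that $f_\nu$ is bounded, so the tail contribution is $O(\Pr_{x,i}[|\Delta|>x/2])=O(x^{-p})$, and it is \emph{this} bound that is $o(x^{\nu-2})$ on the full range $\nu\in(2-p,0]$. The inequality $|\nu|-p<\nu-2$ you appeal to in fact fails for $\nu\in(2-p,\,1-p/2]$, so the $|\nu|$ formulation should be dropped in favour of the separate $\nu\ge0$ and $\nu<0$ bounds you already sketched.
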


The rest of this section is devoted to the proof of Lemma~\ref{lem:calf}.
Denote $\Delta_n :=X_{n+1}-X_n$, and consider the event $E_n := \{ | \Delta_n | \leq X_n^\zeta \}$ where $\zeta \in (0,1)$.
The basic idea behind the proof of Lemma~\ref{lem:calf} is to use a Taylor's formula expansion. Such an expansion is valid only if $\Delta_n$ is not too large;
to handle various truncation estimates we will thus need the following result.

\begin{lemma}
\label{lem:indicator}
Suppose that~\eqref{ass:p-moments} holds for some $p>2$. Then for any $\zeta \in (0,1)$ and any $q \in [0,p]$, we have 
\begin{equation}
\label{col0}
\E_{x,i} \left[ |\Delta_n|^q \2 { E^\rc_n}  \right] \le C_p x^{\zeta(q-p)}. 
\end{equation}
Furthermore, if $\zeta \in (\frac{1}{p-1},1)$, we have
\begin{align}
\E_{x,i}\left[\Delta_n \2{ E_n} \right] &= \E_{x,i}\left[ \Delta_n\right] + o \left(x^{-1}\right), 
\label{col1} \\
\E_{x,i}\left[\Delta_n^2 \2{ E_n} \right] &= \E_{x,i}\left[\Delta_n^2 \right] + o \left(1 \right). 
\label{col2}
\end{align}
\end{lemma}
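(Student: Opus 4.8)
The plan is to establish the three displays in turn, all of them by elementary manipulations from the $p$-th moment bound~\eqref{ass:p-moments}. For~\eqref{col0}, I would write $|\Delta_n|^q \2{E^\rc_n} = |\Delta_n|^q \1{|\Delta_n| > X_n^\zeta}$ and note that on this event $|\Delta_n|^{p-q} > X_n^{\zeta(p-q)}$ (here $q \le p$ so the exponent $p-q$ is non-negative), whence $|\Delta_n|^q \le |\Delta_n|^p X_n^{-\zeta(p-q)} = |\Delta_n|^p x^{\zeta(q-p)}$ on $E^\rc_n$. Taking $\E_{x,i}[\,\blob\,]$ and applying~\eqref{ass:p-moments} gives $\E_{x,i}[|\Delta_n|^q \2{E^\rc_n}] \le C_p x^{\zeta(q-p)}$, which is exactly~\eqref{col0}. (The edge case $q=p$ is the trivial bound $\E_{x,i}[|\Delta_n|^p \2{E^\rc_n}] \le C_p$.)

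For~\eqref{col1} and~\eqref{col2}, I would write $\Delta_n \2{E_n} = \Delta_n - \Delta_n \2{E^\rc_n}$ and $\Delta_n^2 \2{E_n} = \Delta_n^2 - \Delta_n^2 \2{E^\rc_n}$, so that the error terms are $\E_{x,i}[\Delta_n \2{E^\rc_n}]$ and $\E_{x,i}[\Delta_n^2 \2{E^\rc_n}]$ respectively. By~\eqref{col0} applied with $q=1$ and with $q=2$ (both of which are $\le p$ since $p>2$), these errors are $O(x^{\zeta(1-p)})$ and $O(x^{\zeta(2-p)})$. It then remains only to check that, under the hypothesis $\zeta \in (\frac{1}{p-1},1)$, these exponents are negative enough: $\zeta(1-p) < -1$ is equivalent to $\zeta(p-1) > 1$, i.e.\ $\zeta > \frac{1}{p-1}$, which is precisely the assumption, so $O(x^{\zeta(1-p)}) = o(x^{-1})$, giving~\eqref{col1}; and $\zeta(2-p) < 0$ holds since $p > 2$ and $\zeta > 0$, so $O(x^{\zeta(2-p)}) = o(1)$, giving~\eqref{col2}.

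There is essentially no obstacle here: the lemma is a packaging of a single truncation inequality. The one point that needs a little care is the bookkeeping of the exponents in the second part — one must verify that the threshold $\zeta > \frac{1}{p-1}$ is exactly what is needed to push the first-moment truncation error below $x^{-1}$ (the relevant scale in the Lamperti regime, where $\mu_i(x)$ itself is of order $x^{-1}$), while any $\zeta \in (0,1)$ suffices for the second-moment error. I would present~\eqref{col0} as a standalone display derived from $|\Delta_n|^{p-q} > X_n^{\zeta(p-q)}$ on $E_n^\rc$, and then obtain~\eqref{col1}--\eqref{col2} as immediate corollaries by splitting off the indicator and invoking~\eqref{col0}.
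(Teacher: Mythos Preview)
Your proposal is correct and follows essentially the same approach as the paper: the paper likewise derives~\eqref{col0} from the pointwise inequality $|\Delta_n|^q \2{E^\rc_n} \le |\Delta_n|^p X_n^{\zeta(q-p)}$ and then obtains~\eqref{col1}--\eqref{col2} by splitting off the indicator and applying~\eqref{col0} with $q=1,2$, checking the same exponent conditions on $\zeta$.
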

\begin{proof}
For $q \in [0,p]$,
\begin{align}
|\Delta_n|^q \2{ E^\rc_n}   = |\Delta_n|^p |\Delta_n|^{q-p} \2{ E^\rc_n} \le |\Delta_n|^p X_n^{\zeta(q-p)} \label{cal3}
\end{align}
The inequality follows as $q-p \le 0$, so under the condition that $|\Delta_n| > X_n^\zeta$, we have $|\Delta_n|^{q-p} \le (X_n^{\zeta})^{(q-p)} $. 
Taking the conditional expectation on both sides of~\eqref{cal3} and using the condition~\eqref{ass:p-moments}, we obtain~\eqref{col0}.
For the second statement, we use the fact that for $r \in \{1,2\}$, 
\[
\E_{x,i}\left[\Delta_n^r\right] = \E_{x,i}\left[\Delta_n^r \2{ E_n} \right] + \E_{x,i}\left[\Delta_n^r \2{ E^\rc_n} \right],
\]
where, by the $q=r$ case of~\eqref{col0},
\begin{align}
\left| \E_{x,i}\left[\Delta_n^r \2{ E^\rc_n} \right]\right| \le  \E_{x,i}\left[ |\Delta_n|^r \2{ E^\rc_n} \right] \le  C_p x^{\zeta(r-p)}. \label{cal5}
\end{align}
When $r=1$, we choose $\zeta \in (\frac{1}{p-1}, 1)$, so we have $\zeta (1-p)<-1$, and then~\eqref{cal5} gives~\eqref{col1}. When $r=2$, we know $r-p<0$, and then~\eqref{cal5} gives~\eqref{col2}.
\end{proof}

To obtain Lemma~\ref{lem:calf}, we decompose the increment of $f_\nu$. First note that, for $\zeta \in (0,1)$,
\begin{align}
\label{e:two_terms1}
 \E_{x,i}  \left[f_\nu(X_{n+1},\eta_{n+1})-f_\nu(X_n,\eta_n) \right]
 &  = \E_{x,i} \left[  ( f_\nu(X_{n+1},\eta_{n+1})-f_\nu(X_n,\eta_n) ) \2{ E_n} \right] \nonumber\\
 & \qquad {} + \E_{x,i} \left[  (f_\nu(X_{n+1},\eta_{n+1})-f_\nu(X_n,\eta_n) ) \2{ E^\rc_n} \right]  
  .
\end{align}
Choose $\zeta \in (\frac{1}{p-1},1)$ and $x_1 \in \RP$
such that $x_1 - x_1^\zeta \geq x_0$; then on the event $E_n \cap \{ X_n \geq x_1\}$ we have $X_{n+1} \geq x_1 - x_1^\zeta \geq x_0$.
Thus, for all $x \geq x_1$, we may write
\begin{align}
\label{e:two_terms2}
   & \quad {}  \E_{x,i} \left[ \left( f_\nu(X_{n+1},\eta_{n+1})-f_\nu(X_n,\eta_n)\right) \2{ E_n} \right]    \nonumber\\
 & = \E_{x,i}\left[\left(X_{n+1}^\nu-X_{n}^\nu \right) \2{ E_n} \right]  +\frac{\nu}{2} \E_{x,i}\left[\left(b_{\eta_{n+1}}X_{n+1}^{\nu-2}-b_{\eta_n}X_{n}^{\nu-2}\right)\2{ E_n} \right] .
\end{align}
We proceed to estimate the terms on the right-hand sides of~\eqref{e:two_terms1} and~\eqref{e:two_terms2} separately, via a series of lemmas. 

\begin{lemma}
\label{cal4}
Suppose that~\eqref{ass:p-moments} holds for some $p>2$. Suppose also that~\eqref{ass:drift-lamperti} holds.
Let  $\zeta \in (\frac{1}{p-1},1)$. Then for any $r \in \R$, we have, as $x \to \infty$,
\[ \E_{x,i} \left[ \left( X_{n+1}^r - X_n^r \right) \2{ E_n} \right] = r x^{r-2} \left( c_i + \frac{r-1}{2} s_i^2 + o(1) \right) .\]
\end{lemma}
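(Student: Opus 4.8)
The plan is to prove Lemma~\ref{cal4} by a Taylor expansion of $y \mapsto y^r$ around $X_n = x$, controlled on the event $E_n$ where the increment $\Delta_n = X_{n+1}-X_n$ is of order at most $x^\zeta = o(x)$. First I would write, for $x$ large and on $E_n \cap \{X_n \ge x_1\}$ (so that $X_{n+1}$ stays bounded away from $0$), the exact second-order Taylor formula with Lagrange (or integral) remainder:
\[
X_{n+1}^r - X_n^r = r X_n^{r-1} \Delta_n + \frac{r(r-1)}{2} X_n^{r-2} \Delta_n^2 + R_n,
\]
where $R_n = \frac{r(r-1)(r-2)}{6}\, \xi_n^{r-3} \Delta_n^3$ for some $\xi_n$ between $X_n$ and $X_{n+1}$. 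On $E_n$ we have $|\Delta_n| \le x^\zeta$, hence $\xi_n \ge x - x^\zeta \ge x/2$ for $x$ large, so $|\xi_n^{r-3}| \le C x^{r-3}$ (treating the cases $r \ge 3$ and $r < 3$ separately when bounding $\xi_n^{r-3}$ above, but in both cases the bound is $O(x^{r-3})$ using $x/2 \le \xi_n \le x + x^\zeta \le 2x$); this gives $|R_n| \le C x^{r-3} |\Delta_n|^3$.

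Next I would take $\E_{x,i}[\,\cdot\, \2{E_n}]$ of each term. For the remainder, $\E_{x,i}[|R_n|\2{E_n}] \le C x^{r-3}\, \E_{x,i}[|\Delta_n|^3 \2{E_n}]$; on $E_n$, $|\Delta_n|^3 \le x^{\zeta(3-p)}|\Delta_n|^p$ when $p < 3$ (or just $|\Delta_n|^3 = |\Delta_n|^{3-p}|\Delta_n|^p \le x^{\zeta(3-p)}|\Delta_n|^p$ holds whenever $p \ge 3$ too since then $3-p \le 0$ and $|\Delta_n| \ge$ ... — actually on $E_n$, $|\Delta_n| \le x^\zeta$, so $|\Delta_n|^3 \le x^{3\zeta}$, giving $\E_{x,i}[|\Delta_n|^3\2{E_n}] \le x^{(3-p)\zeta}\E_{x,i}[|\Delta_n|^p] \le C_p x^{(3-p)\zeta}$ by~\eqref{ass:p-moments} when $p\le 3$, and $\le C_p x^\zeta$ when $p > 3$ via $|\Delta_n|^3 \le x^{\zeta}|\Delta_n|^2$). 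In all cases $\E_{x,i}[|R_n|\2{E_n}] = O(x^{r-3+\zeta'})$ for some exponent strictly less than $r-2$ (since $\zeta < 1$), hence it is $o(x^{r-2})$. For the first-order term, $r X_n^{r-1}\E_{x,i}[\Delta_n \2{E_n}] = r x^{r-1}(\mu_i(x) + o(x^{-1}))$ by~\eqref{col1}, and $\mu_i(x) = c_i/x + o(x^{-1})$ by~\eqref{ass:drift-lamperti}, so this equals $r x^{r-2}(c_i + o(1))$. For the second-order term, $\frac{r(r-1)}{2} x^{r-2}\E_{x,i}[\Delta_n^2\2{E_n}] = \frac{r(r-1)}{2}x^{r-2}(\sigma_i^2(x) + o(1)) = \frac{r(r-1)}{2}x^{r-2}(s_i^2 + o(1))$ by~\eqref{col2} and~\eqref{ass:drift-lamperti}. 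Summing the three contributions yields $r x^{r-2}(c_i + \frac{r-1}{2}s_i^2 + o(1))$, as claimed.

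The main obstacle, and the only place where care is genuinely needed, is the remainder estimate: one must be careful that the bound on $\xi_n^{r-3}$ holds uniformly (over the randomness, on $E_n$, and over $i \in S$) — this is why the truncation to $E_n$ is essential, as it forces $\xi_n \asymp x$ — and one must check that the resulting exponent $r-3+3\zeta$ (or $r-3+(3-p)\zeta$, depending on how one bounds $|\Delta_n|^3$) is genuinely smaller than $r-2$; this holds precisely because $\zeta < 1$ (for the $x^{3\zeta}$ route, $r-3+3\zeta < r-2 \iff 3\zeta < 1$, which is too strong, so one should instead use $|\Delta_n|^3 \le x^{2\zeta}|\Delta_n|$ on $E_n$ and combine with the moment bound, giving exponent $r-3+2\zeta < r-2 \iff 2\zeta<1$ — still too strong; the correct route is $|\Delta_n|^3 = |\Delta_n|^{3-p}|\Delta_n|^p$, valid since $p > 2$ means... ). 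The cleanest argument: since $p > 2$, write on $E_n$, $|\Delta_n|^3 \le x^{\zeta(3-p)}|\Delta_n|^{p}$ when $p \le 3$ and apply~\eqref{ass:p-moments} to get $O(x^{\zeta(3-p)})$, so the remainder term is $O(x^{r-3+\zeta(3-p)})$ and $r-3+\zeta(3-p) < r-2 \iff \zeta(3-p) < 1$, which holds since $\zeta < 1$ and $3 - p < 1$; when $p > 3$ one instead bounds $|\Delta_n|^3 \le x^\zeta \Delta_n^2$ on $E_n$ and uses~\eqref{ass:p-moments} with exponent $2$, giving $O(x^{r-3+\zeta}) = o(x^{r-2})$. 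Either way the remainder is $o(x^{r-2})$ and the lemma follows. Everything else is a routine application of Lemma~\ref{lem:indicator} and assumption~\eqref{ass:drift-lamperti}.
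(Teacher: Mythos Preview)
Your proposal is correct and follows essentially the same approach as the paper: a second-order Taylor expansion on $E_n$, then Lemma~\ref{lem:indicator} and~\eqref{ass:drift-lamperti} for the first- and second-order terms. The only difference is that your remainder estimate is unnecessarily split into cases; the paper simply bounds $|\Delta_n|^3 \2{E_n} \le X_n^\zeta |\Delta_n|^2$ and uses the second-moment bound from~\eqref{ass:p-moments}, giving $\E_{x,i}[|R_n|\2{E_n}] = O(x^{r-3+\zeta}) = o(x^{r-2})$ directly for all $p>2$ since $\zeta<1$.
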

\begin{proof} 
By Taylor's formula we have that
\begin{align}
 \E_{x,i}\left[\left(X_{n+1}^r-X_{n}^r \right) \2{ E_n} \right]  
& = x^r \E_{x,i} \left[ \left( ( 1 + x^{-1} \Delta_n )^r - 1 \right) \2 { E_n } \right] \nonumber\\
& = x^r \E_{x,i} \left[\left(r \left(\frac{\Delta_n}{X_{n}}\right)+\frac{r(r-1)}{2}\left(\frac{\Delta_n}{X_{n}}\right)^2 \right)\2{ E_n} +Z \right], 
\label{cal1}
\end{align}
where $|Z| \le C X_n^{-3}|\Delta_n|^3 \2{ E_n}$ for some fixed constant $C \in \RP$. To bound the term $\E_{x,i}[Z]$, first we observe that
\[
|Z| \le CX_n^{-3} |\Delta_n|^2|\Delta_n|\2{ E_n} \le C |\Delta_n|^2 X_n^{\zeta-3}. \]
 Taking expectations on both sides of the last inequality, we obtain 
\begin{align*}
\left| \E_{x,i}[Z] \right| \le \E_{x,i} |Z| \le C x^{\zeta-3} \E_{x,i}[|\Delta_n|^2] = O ( x^{\zeta-3} ),
\end{align*}
using~\eqref{ass:p-moments}. Since $\zeta < 1$ this implies  $\E_{x,i}[Z]= o(x^{-2})$, so  the expression~\eqref{cal1} becomes 
\begin{align}
& \quad {} \E_{x,i}\left[\left(X_{n+1}^r-X_{n}^r \right) \2{ E_n} \right] \nonumber \\
& = r x^{r-1}\E_{x,i}\left[ \Delta_n \2{ E_n}\right] + \frac{r (r-1)}{2}x^{r-2}\E_{x,i}\left[\Delta_n^2 \2{ E_n}\right]+o\left(x^{r-2}\right) .
\label{cal2} 
\end{align}
Then by  Lemma~\ref{lem:indicator} together with the facts that, under~\eqref{ass:drift-lamperti},
 \begin{align*}
\E_{x,i}[\Delta_n] = \mu_i(x) = \frac{c_i}{x}+o(x^{-1}), \text{ and } \E_{x,i}[\Delta_n^2] =\sigma^2_i(x) =s_i^2 + o(1),
\end{align*}
we obtain
 \begin{align*}
\E_{x,i}\left[\left(X_{n+1}^r-X_{n}^r \right) \2{ E_n} \right]  
& = r x^{r-1}\left(\frac{c_i}{x}+o\left(x^{-1}\right)\right) + \frac{r(r-1)}{2}x^{r-2}\left(s_i^2 + o(1)\right)+o\left(x^{r-2}\right) , 
\end{align*}
from which the statement in the lemma follows.
\end{proof}

\begin{lemma}
\label{cal11}
Suppose that~\eqref{ass:p-moments} holds for some $p>0$.  
Let $r \in \R$ and $\zeta \in (0,1)$, and let $g : S \to \R$. Then, as $x \to \infty$,
\[ \E_{x,i} \left[ \left( g (\eta_{n+1} ) X_{n+1}^r - g (\eta_n ) X_n^r \right) \2{ E_n} \right] = x^r \sum_{j \in S} (g(j) - g(i) ) q_{ij}(x) + o (x^r) .\]
\end{lemma}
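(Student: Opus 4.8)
The plan is to expand $g(\eta_{n+1}) X_{n+1}^r - g(\eta_n) X_n^r$ on the event $E_n$ by splitting it into the contribution from the change of $\eta$ and the contribution from the change of $X^r$. Write
\[
 g(\eta_{n+1}) X_{n+1}^r - g(\eta_n) X_n^r = g(\eta_{n+1})\bigl( X_{n+1}^r - X_n^r\bigr) + \bigl( g(\eta_{n+1}) - g(\eta_n) \bigr) X_n^r .
\]
For the second term, conditioning on $\eta_n = i$ gives $\E_{x,i}[ (g(\eta_{n+1}) - g(i)) X_n^r \2{E_n} ] = x^r \sum_{j \in S} (g(j) - g(i)) \Pr_{x,i}[\eta_{n+1} = j, E_n ]$, and since $\Pr_{x,i}[\eta_{n+1}=j] = q_{ij}(x)$ and $\Pr_{x,i}[\eta_{n+1}=j, E_n^{\rc}] \le \Pr_{x,i}[E_n^{\rc}] \le C_p x^{-\zeta p} = o(1)$ by the $q=0$ case of~\eqref{col0}, this term is $x^r \sum_{j \in S}(g(j)-g(i)) q_{ij}(x) + o(x^r)$, which already supplies the main term of the lemma.

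It then remains to show the first term, $\E_{x,i}[ g(\eta_{n+1}) ( X_{n+1}^r - X_n^r) \2{E_n} ]$, is $o(x^r)$. Since $g$ is bounded (finite $S$), it suffices to bound $\E_{x,i}[ | X_{n+1}^r - X_n^r | \2{E_n} ]$. On $E_n$ with $x$ large we have $|\Delta_n| \le x^\zeta \le x/2$, so $X_{n+1} \in [x/2, 3x/2]$, and the mean value theorem gives $|X_{n+1}^r - X_n^r| \le |r| (\max(X_{n+1},X_n))^{r-1}|\Delta_n| \le C x^{r-1}|\Delta_n|$ (or $C x^{r-1}|\Delta_n|$ with the appropriate sign of $r-1$ handled by using $\min$ instead of $\max$ when $r<1$; either way one gets a bound $C x^{r-1}|\Delta_n|$ valid on $E_n$ for $x$ large). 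Taking expectations and using~\eqref{ass:p-moments} (which gives $\E_{x,i}|\Delta_n| \le C_p^{1/p}$, or just $\E_{x,i}|\Delta_n| = O(1)$ from the $p \ge 1$ bound), we get $\E_{x,i}[ |X_{n+1}^r - X_n^r| \2{E_n}] = O(x^{r-1}) = o(x^r)$. Combining the two pieces yields the claim.

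The main obstacle, such as it is, is purely bookkeeping with the exponent $r$: when $r \ge 1$ the bound $(\max)^{r-1}$ is the right one, when $r < 1$ one needs $(\min)^{r-1}$, and in both cases one must use that on $E_n$ (for $x$ large enough, depending on $\zeta$) the ratio $X_{n+1}/X_n$ is bounded above and below by positive constants, so $\max(X_{n+1},X_n)^{r-1}$ and $\min(X_{n+1},X_n)^{r-1}$ are each $\Theta(x^{r-1})$. This is the same truncation-to-a-comparable-scale idea already used in the proof of Lemma~\ref{cal4}. No $p>2$ is needed here — only $p>0$ (indeed the statement of the lemma only assumes $p>0$), since we never need a third moment: the error in the $\eta$-transition term is controlled by $\Pr_{x,i}[E_n^{\rc}]$ via~\eqref{col0} with $q=0$, and the error in the $X^r$-change term is controlled by a first moment of $|\Delta_n|$, which is finite under~\eqref{ass:p-moments} with any $p \ge 1$; for $p \in (0,1)$ one instead bounds $\E_{x,i}|\Delta_n| \le 1 + \E_{x,i}|\Delta_n|^p \le 1 + C_p$, so the argument still goes through.
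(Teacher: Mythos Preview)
Your approach is essentially the same as the paper's: the decomposition into the $g(\eta_{n+1})(X_{n+1}^r-X_n^r)$ and $(g(\eta_{n+1})-g(\eta_n))X_n^r$ pieces is identical, and your treatment of the second piece via $\Pr_{x,i}[E_n^{\rc}] = o(1)$ matches the paper exactly.

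There is one genuine slip in your handling of the first piece when $p\in(0,1)$. The inequality $\E_{x,i}|\Delta_n| \le 1 + \E_{x,i}|\Delta_n|^p$ is false for $p<1$: if $\Delta_n = M$ is a large constant then the left side is $M$ and the right side is $1+M^p$, which is smaller. So under the hypothesis $p>0$ alone you cannot conclude $\E_{x,i}|\Delta_n| = O(1)$. The easy fix (and what the paper does) is to avoid the first moment of $|\Delta_n|$ altogether: on $E_n$ you already have $|\Delta_n|\le x^\zeta$, so your mean-value-theorem bound gives directly
\[
\bigl|X_{n+1}^r - X_n^r\bigr|\,\2{E_n} \le C\,x^{r-1}\,|\Delta_n|\,\2{E_n} \le C\,x^{r-1+\zeta} = o(x^r),
\]
pointwise, with no moment needed. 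With this correction your proof is complete and coincides with the paper's.
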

\begin{proof} 
First observe that
\begin{align}
\label{eq1}
& \quad \E_{x,i} \left[ \left( g (\eta_{n+1} ) X_{n+1}^r - g (\eta_n ) X_n^r \right) \2{ E_n} \right] \nonumber\\
& =\E_{x,i} \left[   g (\eta_{n+1} ) \left( X_{n+1}^r -  X_n^r \right) \2{ E_n} \right] + \E_{x,i} \left[ \left( g (\eta_{n+1} )  - g (\eta_n ) \right)  X_n^r \2{ E_n} \right] .
\end{align}
We deal with the two terms on the right-hand side of~\eqref{eq1} separately. First,
\begin{align*}
\left| \E_{x,i} \left[   g (\eta_{n+1} ) \left( X_{n+1}^r -  X_n^r \right) \2{ E_n} \right] \right| \leq \Bigl( \max_{j \in S} | g(j) |  \Bigr) 
\E_{x,i} \left[  \left| X_{n+1}^r -  X_n^r \right| \2{ E_n} \right] ,
\end{align*}
where, by Taylor's formula, given $X_n =x$,
\[ \left| X_{n+1}^r -  X_n^r \right| \2{ E_n} = O ( x^{r+\zeta -1} ) = o (x^r ) .\]
On the other hand,
\begin{align*}
\E_{x,i} \left[ \left( g (\eta_{n+1} )  - g (\eta_n ) \right)  X_n^r \2{ E_n} \right] & = x^r \sum_{j \in S}  ( g (j )  - g ( i ) )  \Pr_{x,i} \left[ \{ \eta_{n+1} = j \} \cap E_n \right] .
\end{align*}
Here
$\left| \Pr_{x,i} \left[ \{ \eta_{n+1} = j \} \cap E_n \right] - q_{ij} (x) \right| \leq \Pr_{x,i} \left[ E^\rc_n \right] \to 0$, 
by the $q=0$ case of Lemma~\ref{lem:indicator}. Combining these calculations gives the result.
\end{proof}

Combining the last two results we obtain the following estimate for the first term on the right-hand side
of~\eqref{e:two_terms1}.

\begin{lemma}
\label{cal6}
Suppose that~\eqref{ass:p-moments} holds for some $p>2$. Suppose also that~\eqref{ass:drift-lamperti} and~\eqref{ass:q-lim} hold.
Let $\zeta \in (\frac{1}{p-1},1)$. Then for any $r \in \R$, we have, as $x \to \infty$,
\[ \E_{x,i} \left[ \left( f_r (X_{n+1},\eta_{n+1} ) - f_r (X_n,\eta_n) \right) \2{ E_n} \right] 
= \frac{r}{2} x^{r-2} \left( 2 c_i +(r-1) s_i^2 + \! \sum_{j \in S} (b_j - b_i ) q_{ij} + o(1) \right)
 .\]
\end{lemma}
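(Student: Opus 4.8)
The plan is to combine Lemma~\ref{cal4} and Lemma~\ref{cal11} directly, exploiting the additive structure of $f_r$. Recall that for $x \ge x_0$ we have $f_r(x,i) = x^r + \frac{r}{2} b_i x^{r-2}$, so on the event $E_n \cap \{X_n \ge x_1\}$ (where both $X_n$ and $X_{n+1}$ exceed $x_0$, by the choice of $x_1$ made before Lemma~\ref{cal4}) the increment of $f_r$ splits exactly as in~\eqref{e:two_terms2}:
\[
\left( f_r(X_{n+1},\eta_{n+1}) - f_r(X_n,\eta_n) \right) \2{E_n} = \left( X_{n+1}^r - X_n^r \right) \2{E_n} + \frac{r}{2} \left( b_{\eta_{n+1}} X_{n+1}^{r-2} - b_{\eta_n} X_n^{r-2} \right) \2{E_n}.
\]
Taking $\E_{x,i}$ of both sides, the first term is handled by Lemma~\ref{cal4}, which gives $r x^{r-2}\left( c_i + \frac{r-1}{2} s_i^2 + o(1) \right)$. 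For the second term I would apply Lemma~\ref{cal11} with exponent $r-2$ in place of $r$ and with $g(\cdot) = b_\cdot$, obtaining $\frac{r}{2} x^{r-2} \sum_{j \in S} (b_j - b_i) q_{ij}(x) + o(x^{r-2})$.

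The only remaining wrinkle is to replace $q_{ij}(x)$ by its limit $q_{ij}$ inside the sum. Under~\eqref{ass:q-lim} we have $q_{ij}(x) = q_{ij} + o(1)$ as $x \to \infty$, and since $S$ is finite the sum $\sum_{j \in S} (b_j - b_i) q_{ij}(x) = \sum_{j \in S} (b_j - b_i) q_{ij} + o(1)$; multiplying by the prefactor $\frac{r}{2} x^{r-2}$ keeps the error within $o(x^{r-2})$. Collecting the two contributions and the error terms yields
\[
\E_{x,i} \left[ \left( f_r(X_{n+1},\eta_{n+1}) - f_r(X_n,\eta_n) \right) \2{E_n} \right] = \frac{r}{2} x^{r-2} \left( 2 c_i + (r-1) s_i^2 + \sum_{j \in S} (b_j - b_i) q_{ij} + o(1) \right),
\]
which is exactly the claimed identity.

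There is essentially no main obstacle here: this lemma is pure bookkeeping, assembling two already-established estimates. The one point requiring a little care is making sure the decomposition~\eqref{e:two_terms2} is legitimate, i.e.\ that on $E_n \cap \{X_n \ge x_1\}$ both the pre- and post-jump positions lie in the region $x \ge x_0$ where $f_r$ has its polynomial form — but this is precisely guaranteed by the choice of $x_1$ with $x_1 - x_1^\zeta \ge x_0$ recorded just before~\eqref{e:two_terms2}, so the whole argument is asymptotic in $x$ and valid for all $x$ large enough. I would also note in passing that the value of $f_r$ on $\{x < x_0\}$ plays no role, since we are only tracking the behaviour as $x \to \infty$.
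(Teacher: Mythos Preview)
Your proof is correct and follows exactly the paper's own argument: decompose via~\eqref{e:two_terms2}, apply Lemma~\ref{cal4} to the first term and Lemma~\ref{cal11} (with $g(\cdot)=b_\cdot$ and exponent $r-2$) to the second, then use~\eqref{ass:q-lim} to pass from $q_{ij}(x)$ to $q_{ij}$. The extra care you take in justifying the validity of the decomposition for $x\ge x_1$ is appropriate and matches what the paper records just before~\eqref{e:two_terms2}.
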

\begin{proof}
In the equation~\eqref{e:two_terms2}
we apply Lemma~\ref{cal4} and  Lemma~\ref{cal11} with $g(y)=b_y$ and $r-2$ in place of $r$;
 together with~\eqref{ass:q-lim} we obtain the result.
\end{proof}

We have the following estimate for the second term on the right-hand side
of~\eqref{e:two_terms1}.

\begin{lemma}
\label{cal17}
Suppose that~\eqref{ass:p-moments} holds for some $p>2$. 
Then for any $r \in (2-p, p]$, we can choose $\zeta \in (0,1)$ for which, as $x \to \infty$, 
\begin{align*}
\E_{x,i} \left[ \left|  f_r ( X_{n+1}, \eta_{n+1} ) - f_r (X_n, \eta_n) \right|  \2{ E^\rc_n}  \right] = o (x ^{r-2} ).
\end{align*}
\end{lemma}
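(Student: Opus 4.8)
The plan is to bound the increment of $f_r$ on the "bad" event $E^\rc_n = \{ |\Delta_n| > X_n^\zeta \}$ by splitting according to whether $X_{n+1}$ is large or small, and to control each piece using Lemma~\ref{lem:indicator} (in particular the tail estimate~\eqref{col0}) combined with the polynomial bounds on $f_r$ from Lemma~\ref{cal21}. First I would write, using the triangle inequality and Lemma~\ref{cal21},
\[
\left| f_r(X_{n+1},\eta_{n+1}) - f_r(X_n,\eta_n) \right| \2{ E^\rc_n}
\le k_2 (1+X_{n+1})^r \2{ E^\rc_n} + k_2 (1+X_n)^r \2{ E^\rc_n}
\]
when $r \ge 0$, and similarly with an extra care when $r < 0$ (where $f_r$ is bounded above by $k_2(1+x)^r \le k_2$ and bounded below away from $0$, so the left side is in fact $O(1)\2{E^\rc_n}$ and the estimate is easy). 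So the substantive case is $r \in (0,p]$, where the second term contributes $k_2(1+x)^r \Pr_{x,i}[E^\rc_n] = O(x^r \cdot x^{-\zeta p})$ by the $q=0$ case of~\eqref{col0}; choosing $\zeta$ close enough to $1$ that $\zeta p > r - (r-2) = 2$, i.e. $\zeta > 2/p$ (possible since $p > 2$), makes this $o(x^{r-2})$.

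The main work is the first term, $\E_{x,i}[(1+X_{n+1})^r \2{E^\rc_n}]$. On $E^\rc_n$ we have $X_{n+1} \le X_n + |\Delta_n| \le x + |\Delta_n|$, so $(1+X_{n+1})^r \le (1+x+|\Delta_n|)^r \le C(x^r + |\Delta_n|^r)$ for $r \ge 0$ by convexity/subadditivity of $t \mapsto t^r$ up to a constant (or just $(a+b)^r \le 2^{r-1}(a^r+b^r)$ for $r \ge 1$, and $(a+b)^r \le a^r + b^r$ for $r \le 1$). The $x^r$ piece is handled exactly as above. The remaining piece is $C\, \E_{x,i}[|\Delta_n|^r \2{E^\rc_n}]$, and here I would apply~\eqref{col0} directly with $q = r$: this gives $C_p x^{\zeta(r-p)}$. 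Since $\zeta(r-p) < 0$ and we want this to be $o(x^{r-2})$, we need $\zeta(r-p) < r-2$, i.e. $\zeta(p-r) > 2-r$. When $r < 2$ the right side is positive and we need $\zeta > (2-r)/(p-r)$; note $(2-r)/(p-r) < 1$ precisely because $p > 2$, so such $\zeta \in (0,1)$ exists. When $r \ge 2$ the inequality $\zeta(r-p) < r-2$ holds for all $\zeta \in (0,1)$ since the left side is negative or zero and the right side is nonnegative (with equality at $r=2$ only if $\zeta(2-p) < 0$, which holds). Combining with the constraint $\zeta > 2/p$ from the previous paragraph — both are lower bounds on $\zeta$ strictly below $1$ — we can pick a single $\zeta \in (0,1)$ satisfying both, and for that $\zeta$ the whole expression is $o(x^{r-2})$.

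The only genuinely delicate point — and the step I expect to require the most care — is the bookkeeping of the three competing constraints on $\zeta$: it must lie in $(\frac{1}{p-1},1)$ if we also want the companion estimates~\eqref{col1}--\eqref{col2} from Lemma~\ref{lem:indicator} to hold (though for \emph{this} lemma only~\eqref{col0} is used, so that constraint may be dropped here), it must satisfy $\zeta p > 2$ to kill the $(1+X_n)^r$ and $x^r$ contributions, and it must satisfy $\zeta > (2-r)/(p-r)$ (vacuous when $r \ge 2$) to kill the $|\Delta_n|^r$ contribution. I would verify that $\max\{ 2/p,\ (2-r)/(p-r) \} < 1$ uniformly over $r \in (2-p,p]$ — in fact it suffices to check $r$ ranging over $(0,p]$ since $r \le 0$ is the trivial case — which reduces to the elementary inequalities $p > 2$ and $2 - r < p - r$, both immediate. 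Having fixed such a $\zeta$, the conclusion follows by assembling the bounds; no Taylor expansion is needed here, only the crude size estimates, which is why the displayed statement asks merely for an $o(x^{r-2})$ bound on the absolute value rather than an asymptotic equality.
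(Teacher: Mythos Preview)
Your proposal is correct and follows essentially the same route as the paper: both arguments reduce to the bound $f_r(X_{n+1},\eta_{n+1}) \le C\bigl[(1+X_n)^r + |\Delta_n|^r\bigr]$ for $r>0$ and then invoke the $q=0$ and $q=r$ cases of~\eqref{col0}, while the case $r\le 0$ is handled via boundedness of $f_r$. The only cosmetic difference is that the paper obtains that bound by splitting on $\{|\Delta_n|\le X_n/2\}$ versus its complement, whereas you use the elementary inequality $(a+b)^r \le C(a^r+b^r)$; your observation that the resulting constraint $\zeta>(2-r)/(p-r)$ is in fact weaker than $\zeta>2/p$ (for $0<r<2$) recovers exactly the paper's single condition $\zeta>2/p$.
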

\begin{proof} 
 We may suppose throughout this proof that $X_n \ge 1$. 
First suppose that $r \in (0,p]$. 
If $|\Delta_n|\le\frac{X_n}{2}$, then $\frac{X_n}{2}\le X_n+ \Delta_n \le \frac{3X_n}{2}$. Thus with Lemma~\ref{cal21} we have, on $\{ |\Delta_n|\le\frac{X_n}{2} \}$,
\begin{align}
f_r(X_{n+1}, \eta_{n+1}) &\le k_2 \left(1+\frac{3X_n}{2}\right)^r \le C_1 \left(1+X_n \right)^r , \label{cal28}
\end{align}
for 
some constant $C_1 \in \RP$. On the other hand, if $|\Delta_n|> \frac{X_n}{2}$, then $0 \le X_{n+1} = X_n + \Delta_n  \le 3|\Delta_n|$. So with Lemma~\ref{cal21}, 
we have, on $\{ |\Delta_n| > \frac{X_n}{2} \}$,
\begin{align}
\label{cal27}
f_r(X_{n+1}, \eta_{n+1}) &\le k_2 \left(1+3|\Delta_n|\right)^r \le C_2\left|\Delta_n \right|^r , 
\end{align}
for some constant $C_2 \in \RP$. Combining the results of \eqref{cal28} and \eqref{cal27}, 
we obtain for $r >0$,
\begin{equation}
f_r(X_{n+1}, \eta_{n+1}) \le C_3 \left(1+X_n \right)^r + C_3 |\Delta_n|^r, \label{cal32}
\end{equation}
for some $C_3 \in \RP$. Hence, for $r >0$, for some $C \in \RP$, 
\begin{align*}
\left|\E_{x,i} \left[ (f_r(X_{n+1},\eta_{n+1})-f_r(X_n,\eta_n)) \2{ E^\rc_n} \right] \right| 
& \le f_r (x, i) \Pr_{x,i} \left[ E^\rc_n \right] + \E_{x,i} \left[ | f_r(X_{n+1},\eta_{n+1}) | \2{ E^\rc_n} \right]  \\
& \leq C \left(1+x \right)^r \Pr_{x,i} \left[ E^\rc_n \right] + C \E_{x,i} \left[ |\Delta_n|^r \2{ E^\rc_n} \right] ,
\end{align*}
where we have used Lemma~\ref{cal21} and inequality~\eqref{cal32}.
Then, by the $q=0$ and $q=r \in (0,p]$ cases of Lemma~\ref{lem:indicator} we have
\[ \left|\E_{x,i} \left[ (f_r(X_{n+1},\eta_{n+1})-f_r(X_n,\eta_n)) \2{ E^\rc_n} \right] \right| = O(x^{r-p\zeta})+O(x^{\zeta(r-p)}) = O(x^{r-p\zeta}) ,\]
since $\zeta<1$. This last term is $o(x^{r-2})$ provided $r-p\zeta<r-2$, i.e., $\zeta>\frac{2}{p}$.

Finally, suppose that $r \in (2-p, 0]$. Now by Lemma~\ref{cal21}, we have
$0 \leq f(x, i) \leq C$ for some $C \in \RP$ and all $x$ and $i$, 
so that
\begin{align*}
  \left| \E_{x,i} \left[\left(f_r(X_{n+1},\eta_{n+1})-f_r(X_n,\eta_n)\right) \2{ E^\rc_n} \right]\right| \le C \Pr_{x,i} \left[ E^\rc_n \right] = O(x^{-p\zeta}),
\end{align*}
by the $q=0$ case of Lemma~\ref{lem:indicator}. Since $r > 2-p$, we can choose $\zeta$ such that $0<\frac{2-r}{p}<\zeta<1$, which gives $-\zeta p < r -2$. 
\end{proof}

Now we are ready to complete the proof Lemma~\ref{lem:calf}. 

\begin{proof}[Proof of Lemma~\ref{lem:calf}]
The expression for the first moment in~\eqref{e:lyapunov1}
is simply a combination of the $r=\nu$ cases of Lemmas~\ref{cal6} and~\ref{cal17}.
\end{proof}

\section{Proofs of results for Lamperti drift}

\subsection{Some consequences of the Fredholm alternative}

  In this section, vectors are   column vectors on $\R^{|S|}$, 
 $\0$ denotes the column vector whose components are all zero, and $I$ denotes the $|S| \times |S|$ identity matrix.
We will need the following well-known algebraic result.

\begin{lemma}[Fredholm alternative]
\label{lem:fa}
Given an $|S| \times |S|$ matrix $A$ and a  column vector $\bb$, the equation $A\ba = \bb$ has a solution $\ba$ if and only if  any  column vector $\by$
for which $A^\tra \by = \0$ satisfies $\by^\tra \bb = 0$.
\end{lemma}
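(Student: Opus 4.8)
The plan is to identify solvability of $A\ba = \bb$ with membership $\bb \in \operatorname{Im}(A)$, where $\operatorname{Im}(A) := \{ A \bx : \bx \in \R^{|S|} \}$ is the column space of $A$, and then to exploit the orthogonal decomposition of $\R^{|S|}$ along this subspace with respect to the standard inner product $\langle \bx, \by \rangle = \bx^\tra \by$. The algebraic fact at the heart of the argument is the identity $\operatorname{Im}(A)^\perp = \ker(A^\tra)$; the statement of the lemma is, after unwinding definitions, precisely the assertion that $\bb \in \operatorname{Im}(A)$ if and only if $\bb$ is orthogonal to $\ker(A^\tra)$.

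For the forward (``only if'') direction, suppose $\ba$ satisfies $A \ba = \bb$, and let $\by$ be any vector with $A^\tra \by = \0$. Then $\by^\tra \bb = \by^\tra A \ba = (A^\tra \by)^\tra \ba = \0^\tra \ba = 0$, which is immediate and needs nothing beyond associativity of matrix multiplication and $(A^\tra)^\tra = A$.

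For the reverse (``if'') direction I would argue by contraposition: assume $A \ba = \bb$ has no solution, i.e.\ $\bb \notin \operatorname{Im}(A)$, and produce a vector $\by$ with $A^\tra \by = \0$ but $\by^\tra \bb \neq 0$. Since $\operatorname{Im}(A)$ is a linear subspace of the finite-dimensional space $\R^{|S|}$, write $\bb = \bb_1 + \bb_2$ with $\bb_1 \in \operatorname{Im}(A)$ and $\bb_2 \in \operatorname{Im}(A)^\perp$; because $\bb \notin \operatorname{Im}(A)$ we have $\bb_2 \neq \0$. By definition of orthogonal complement, $\bb_2^\tra (A \bx) = 0$ for every $\bx \in \R^{|S|}$, equivalently $(A^\tra \bb_2)^\tra \bx = 0$ for all $\bx$, whence $A^\tra \bb_2 = \0$. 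On the other hand $\bb_2^\tra \bb = \bb_2^\tra \bb_1 + \bb_2^\tra \bb_2 = 0 + \| \bb_2 \|^2 > 0$. Thus $\by := \bb_2$ is the required witness, completing the contrapositive.

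This is entirely standard linear algebra, so there is no genuine obstacle; the only point requiring a moment's care is the reverse direction, where one must invoke the existence of the orthogonal decomposition of $\R^{|S|}$ along the subspace $\operatorname{Im}(A)$ (it is finite-dimensionality that makes this decomposition available) in order to manufacture the explicit counterexample vector. An alternative phrasing would be to quote the identity $\operatorname{Im}(A)^\perp = \ker(A^\tra)$ outright and observe that $\bb \in \operatorname{Im}(A)$ iff $\bb \perp \ker(A^\tra)$, which is merely a restatement of the claim.
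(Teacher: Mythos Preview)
Your proof is correct: the forward direction is the one-line computation you give, and the reverse direction via orthogonal projection onto $\operatorname{Im}(A)$ is the standard and cleanest way to exhibit the witness $\by$. The paper itself does not prove this lemma at all; it simply cites a reference for the proof, so your self-contained argument actually supplies more than the paper does.
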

See \cite{AR} for other formulations and the proof of this theorem. 
First of all, we shall give the proof of Lemma~\ref{lem:L4}.
\begin{proof}[Proof of Lemma~\ref{lem:L4}]
First we write the system of equations~\eqref{e:d-system} in matrix form.
To this end, denote by $Q = (q_{ij})_{i,j\in S}$ the transition matrix for the Markov chain $\eta^*_n$ on $S$, 
and denote column vectors $\ba =(a_1, a_2, \ldots, a_{|S|})^\tra$ and $\bd =(d_1, d_2, \ldots, d_{|S|})^\tra$. Then~\eqref{e:d-system} is equivalent to
\[
(Q-I) \ba = -\bd.
\]
Setting $A=Q-I$ and $\bb = -\bd$, Lemma~\ref{lem:fa} shows that~\eqref{e:d-system} has a solution $\ba$
if and only if any  column vector $\by$ such that $(Q-I)^\tra \by = \0$ satisfies $\by^\tra \bd = 0$. But $(Q-I)^\tra \by = \0$
is equivalent to $\by^\tra Q = \by^\tra$, which implies that $\by = \alpha \bpi$ ($\alpha \in \R$) is a scalar multiple of the (unique) stationary
distribution for $Q$. Thus~\eqref{e:d-system}  
has a solution $\ba$ if and only if $\bpi^\tra \bd = 0$, i.e., $\sum_{i \in S}d_i \pi_i=0$, the special case that $\alpha=0$ contributing nothing to the condition. 

Finally, we show that any solution $\ba$ to~\eqref{e:d-system} 
  is unique up to translation. 
	Suppose there are two solutions, $\ba'$ and $\ba''$, so that $(Q-I)\ba' = (Q-I)\ba'' = -\bd$;
	 thus $(Q-I)(\ba'-\ba'')=\0$. In other words, $Q (\ba'-\ba'') =  \ba'-\ba''$. As $Q$ is a stochastic matrix, this means that $\ba'-\ba''$
	is a scalar multiple of the eigenvector $(1,1, \ldots, 1)^\tra$ corresponding to eigenvalue~$1$. Thus the components of $\ba'$ and $\ba''$
	differ by a fixed amount.
\end{proof}

A modification of the above argument yields the following statements, with inequalities instead of equality,
which will enable us to show that, under 
appropriate conditions
involving $\pi_j$, 
 suitable $b_i$ exist to construct the Lyapunov function $f_\nu$ satisfying appropriate supermartingale conditions.

\begin{lemma}
\label{lem:L2}
Let $u_i \in \R$ for each $i \in S$.
\begin{itemize}
\item[(i)]
Suppose $\sum_{i \in S} u_i \pi_i<0$.
Then there exist $(b_i, i \in S)$ such that $u_i +\sum_{j \in S} (b_j-b_i)q_{ij}<0$ for all $i$.
\item[(ii)]
Suppose   $\sum_{i \in S} u_i \pi_i >0$.
Then there exist $(b_i, i \in S)$ such that $u_i +\sum_{j \in S}(b_j-b_i)q_{ij}>0$ for all $i$.
\end{itemize}
\end{lemma}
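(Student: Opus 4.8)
The plan is to deduce this from the Fredholm alternative (Lemma~\ref{lem:fa}) together with a perturbation argument. For part~(i), suppose $\sum_{i \in S} u_i \pi_i < 0$. Let $\bar u := \sum_{i\in S} u_i \pi_i$, and consider the modified vector $u_i' := u_i - \bar u$, which satisfies $\sum_{i\in S} u_i' \pi_i = 0$. By (the proof of) Lemma~\ref{lem:L4}, applied with $d_i = u_i'$, the system $u_i' + \sum_{j \in S}(b_j - b_i) q_{ij} = 0$ for all $i \in S$ has a solution $(b_i, i \in S)$ (unique up to translation). For this choice of $(b_i)$ we then get, for every $i \in S$,
\[
u_i + \sum_{j \in S}(b_j - b_i) q_{ij} = u_i - u_i' = \bar u < 0 ,
\]
which is exactly the desired strict inequality. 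Part~(ii) is the same argument with the inequality reversed: take $u_i' = u_i - \bar u$ with $\bar u > 0$, solve the equality system, and obtain $u_i + \sum_j (b_j - b_i) q_{ij} = \bar u > 0$ for all $i$.

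I would double-check the one subtle point, which is that Lemma~\ref{lem:L4} guarantees a solution to the \emph{equality} system precisely under the hypothesis $\sum_i u_i' \pi_i = 0$; since we have arranged this by the centring $u_i' = u_i - \bar u$ and the fact that $\sum_i \pi_i = 1$, the hypothesis is met. No genuine inequality manipulation is needed — the trick is that subtracting the constant $\bar u$ turns the strict inequality into the solvability condition for an equality system, and then $\bar u$ reappears uniformly as the value of the left-hand side.

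There is essentially no obstacle here: the only thing to be careful about is the direction of the strict inequality and the bookkeeping that $\bar u$ comes out with the correct sign in each case. One could alternatively phrase the whole thing directly via Lemma~\ref{lem:fa}: writing the inequality $u_i + \sum_j (b_j-b_i) q_{ij} < 0$ as $(Q-I)\bb < -\bu$ componentwise, one seeks $\bb$ with $(Q-I)\bb + \bu$ strictly negative in every coordinate; choosing $(Q-I)\bb = -\bu + \bar u \mathbf{1}$ (solvable since $\bpi^\tra(\bu - \bar u \mathbf{1}) = 0$) makes the left side equal to the constant vector $\bar u \mathbf{1} < \mathbf{0}$. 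This is the same computation, so I would present the short version building on Lemma~\ref{lem:L4}.
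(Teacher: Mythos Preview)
Your proof is correct and follows essentially the same approach as the paper: shift the $u_i$ so that the $\pi$-weighted sum vanishes, apply Lemma~\ref{lem:L4} to solve the resulting equality system, and recover the strict inequality from the shift. The only cosmetic difference is that the paper shifts by the $i$-dependent amount $\eps_i = \eps/(|S|\pi_i)$ rather than your constant $\bar u$, which makes your version marginally cleaner.
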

\begin{proof}
We prove only part~(i); the proof of~(ii) is similar.
Suppose that $\sum_{i \in S} u_i \pi_i = -\eps$ for some $\eps >0$.
Then taking $\eps_i = \frac{\eps}{|S| \pi_i}$
we get $\sum_{i \in S} (u_i + \eps_i ) \pi_i = 0$.
An application of Lemma~\ref{lem:L4}
with $d_i = u_i + \eps_i$ shows that there exist $b_i$ such that
\[ u_i +\eps_i + \sum_{j \in S}(b_j-b_i)q_{ij} = 0, \text{ for all } i \in S,\]
which gives the result since $\eps_i >0$.
\end{proof}

\subsection{Proof of Theorem~\ref{thm:L2}}

To obtain existence of moments of hitting times, we apply the following semimartingale result,
which is a reformulation of Theorem~1 from~\cite{AIM}.
\begin{lemma}
\label{thm:moments}
Let $W_n$ be an integrable $\cF_n$-adapted stochastic process, taking values in an unbounded subset of $\RP$, 
with $W_0=w_0$ fixed. Suppose that there exist $\delta>0$, $w>0$, and $\gamma<1$ such that for any $n \ge 0$, 
\begin{equation}
\E[W_{n+1}-W_n \mid \cF_n] \le -\delta W_n^\gamma , \text{ on } \{n < \lambda_w\}, \label{eqn1}
\end{equation}
where $\lambda_w=\min\{n \ge 0 : W_n \le w\}$. Then, for any $s \in [0,\frac{1}{1-\gamma}]$, $\E[\lambda_w^s]< \infty$.
\end{lemma}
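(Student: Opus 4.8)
Write $\lambda := \lambda_w$ and $\alpha := 1-\gamma \in (0,\infty)$, so that the claim is that $\E[\lambda^s]<\infty$ for every $s\le 1/\alpha$; equivalently $s\alpha\le 1$, which is precisely what keeps the relevant power functions concave. The statement is a reformulation of Theorem~1 of~\cite{AIM}, so one route is simply to verify that its hypotheses hold with the Lyapunov function $w\mapsto w^{1-\gamma}$ (suitably normalised), reading off the exponent $1/(1-\gamma)$. Below I outline a fairly self-contained argument in the same spirit, based on a family of power Lyapunov functions together with a restart/bootstrap; this also fits the Lyapunov-function methodology used elsewhere in the paper.

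The engine is the elementary estimate: for any $\beta\in(0,1]$, concavity of $w\mapsto w^\beta$ gives $W_{n+1}^\beta-W_n^\beta\le \beta W_n^{\beta-1}(W_{n+1}-W_n)$, so on $\{n<\lambda\}$,
\begin{equation*}
\E[W_{n+1}^\beta-W_n^\beta\mid\cF_n]\le \beta W_n^{\beta-1}\,\E[W_{n+1}-W_n\mid\cF_n]\le -\beta\delta\,W_n^{\beta-\alpha}.
\end{equation*}
Two consequences follow. (a) $W_{n\wedge\lambda}^\beta$ is a non-negative supermartingale for each $\beta\in(0,1]$, so $\E[W_{n\wedge\lambda}^\beta]\le w_0^\beta$ for all $n$. (b) For $\beta=t\alpha\le 1$ the process $W_{n\wedge\lambda}^{t\alpha}+t\alpha\delta\sum_{m<n\wedge\lambda}W_m^{(t-1)\alpha}$ is a non-negative supermartingale, whence, letting $n\to\infty$ (monotone convergence), $\E\bigl[\sum_{m<\lambda}W_m^{(t-1)\alpha}\bigr]\le (t\alpha\delta)^{-1}w_0^{t\alpha}<\infty$ for every $t\in(0,1/\alpha]$ — the endpoint $t\alpha=1$ being the original hypothesis itself.

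I would then split on the sign of $\gamma$. When $\gamma\ge 0$ (so $\alpha\le 1$), taking $\beta=\alpha$ gives $\E[W_{n+1}^\alpha-W_n^\alpha\mid\cF_n]\le-\alpha\delta$ on $\{n<\lambda\}$; since the hypotheses are per-step, running this from an arbitrary time $m$ gives the restart bound $\E[(\lambda-m)^+\mid\cF_m]\le(\alpha\delta)^{-1}W_m^\alpha$, in particular $\E[\lambda]<\infty$. One then shows, for every $t\in[0,1/\alpha]$ and every admissible starting value, that the $t$-th moment of $\lambda$ is finite, by induction on $\lceil t\rceil$: the base case $t\le 1$ is Jensen applied to $\E[\lambda]\le(\alpha\delta)^{-1}W_0^\alpha$, and for $t>1$ the pointwise integral-comparison bound $\lambda^t\le t\sum_{m<\lambda}(\lambda-m)^{t-1}$, combined with conditioning the $m$-th term on $\cF_m$, the inductive hypothesis applied to the process restarted at time $m$ (whose passage time is $\lambda-m$), and consequence~(b), reduces $\E[\lambda^t]$ to a finite multiple of $\E[\lambda]+\E\bigl[\sum_{m<\lambda}W_m^{(t-1)\alpha}\bigr]<\infty$. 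When $\gamma<0$ (so $\alpha>1$ and $s\le 1/\alpha<1$), the choice $\beta=\alpha$ is inadmissible, so instead I would use the Doob decomposition $W_{n\wedge\lambda}=w_0+M_n-A_n$ of the non-negative supermartingale $W_{n\wedge\lambda}$: its compensator satisfies $A_\infty\ge\delta\sum_{k<\lambda}W_k^\gamma\ge\delta\,\lambda\,S^\gamma$, where $S:=\sup_k W_{k\wedge\lambda}$ (using $W_k\le S$ and $\gamma<0$), so $\lambda\le\delta^{-1}A_\infty S^{-\gamma}$. Combining $\E[A_\infty]\le w_0$ with Doob's maximal inequality for $W_{n\wedge\lambda}$ (which gives $S\in L^q$ for all $q<1$) via Hölder's inequality — legitimate because $s(-\gamma)<1$ and $s(1-\gamma)\le 1$ — bounds $\E[\lambda^s]\le\delta^{-s}\E[A_\infty^s S^{-\gamma s}]<\infty$. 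In either case monotone convergence then gives $\E[\lambda^s]=\lim_n\E[(n\wedge\lambda)^s]<\infty$.

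I expect the bootstrap step in the case $\gamma\ge 0$ to be the crux: the naive route of estimating $\E[\lambda^t]$ from tail bounds on $\Pr[\lambda>m]$ is circular, so the point is to trade powers of $\lambda$ for sums of powers of $W$ along the trajectory and then absorb these into the supermartingale family of~(b); the constraint $s\le 1/(1-\gamma)$ is exactly what keeps every exponent appearing in the induction at most $1$, so that the power functions remain concave. A secondary nuisance is the boundary case $s=1/(1-\gamma)$ when $\gamma<0$, where the Hölder exponents degenerate and a small truncation refinement is required; alternatively one invokes~\cite{AIM} directly there.
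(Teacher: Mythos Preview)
The paper does not prove this lemma: it is introduced as ``a reformulation of Theorem~1 from~\cite{AIM}'' and used as a black box, with no argument supplied. You correctly note this at the outset, so at the level of what the paper actually does your first sentence already suffices.

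Your self-contained sketch goes beyond the paper and is essentially a condensed version of the argument in~\cite{AIM}. It is sound in outline, with two caveats. First, in the case $\gamma\ge 0$, the induction as you phrase it --- finiteness of $\E[\lambda^{t}]$ for every starting point --- is not quite strong enough to feed back into the bootstrap: to control $\sum_{m<\lambda}\E[(\lambda-m)^{t-1}\mid\cF_m]$ you need the \emph{quantitative} hypothesis $\E[(\lambda-m)^{t-1}\mid\cF_m]\le C_{t-1}W_m^{(t-1)\alpha}$ on $\{m<\lambda\}$, not merely that each conditional moment is finite. Your base case (Jensen applied to the restart bound) does deliver this quantitative form, and once the inductive statement is restated accordingly the step closes cleanly via consequence~(b); this also renders the stray $\E[\lambda]$ term in your final display unnecessary, since on $\{m<\lambda\}$ one has $W_m>w$ and any additive constant can be absorbed into $W_m^{(t-1)\alpha}$. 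Second, as you yourself flag, the endpoint $s=1/(1-\gamma)$ for $\gamma<0$ escapes your H\"older argument because the exponents degenerate; deferring to~\cite{AIM} there is entirely in keeping with the paper's own treatment of the lemma.
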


Now we can give the proof of Theorem~\ref{thm:L2}.

\begin{proof}[Proof of Theorem~\ref{thm:L2}]
Set $W_n := f_\nu ( X_n, \eta_n)$ for $\nu \in (0,p]$; note $W_n \to \infty$ as $X_n \to \infty$.
We aim to show that~\eqref{eqn1} holds with $\gamma= \frac{\nu-2}{\nu} < 1$, for appropriate choice of $(b_i, i \in S)$.
First note that, for $X_n$ sufficiently large,
\begin{align*}
W_n^{\gamma} & =  \left(X_n^\nu+\frac{\nu}{2} b_{\eta_n} X_n^{\nu-2}\right)^{\frac{\nu-2}{\nu}} \\
& =  X_n^{\nu-2}\left(1+\frac{\nu}{2} b_{\eta_n} X_n^{-2}\right)^{\frac{\nu-2}{\nu}} \\
& = X_n^{\nu-2} + O\left(X_n^{\nu-4}\right) ,
\end{align*}
using the fact that $b_{\eta_n}$ is uniformly bounded. In other words, $X_n^{\nu -2 } = W_n^\gamma + o ( W_n^\gamma )$,
so we have from Lemma~\ref{lem:calf} that
\begin{align}
\label{e:511a} 
 \E   [  W_{n+1} - W_n   \mid X_n , \eta_n ] 
= \frac{\nu}{2} W_n^\gamma \left( 2 c_{\eta_n} + (\nu-1) s_{\eta_n}^2 + \sum_{j \in S} (b_j - b_{\eta_n}) q_{{\eta_n}j} \right)  + o(W_n^\gamma) 
 .\end{align}
Take $\nu =  p \wedge 2\theta$ and set $u_i = 2c_i + (\nu-1) s_i^2$; then, by~\eqref{eqn:L2},
\[ \sum_{i\in  S} u_i \pi_i \leq \sum_{i\in  S} \left[ 2 c_i + (2\theta-1) s_i^2 \right] \pi_i < 0 ,\]
so that by Lemma~\ref{lem:L2}(i) we may choose $(b_i, i \in S)$ so that the coefficient of $W_n^\gamma$ on the right-hand side
of~\eqref{e:511a} is strictly negative, uniformly in $\eta_n \in S$. Hence there exists $\delta >0$ such that
\[
 \E   [  W_{n+1} - W_n   \mid X_n , \eta_n ] \le - \delta W_n^\gamma  , \text{ on } \{ W_n \geq w \},
\]
for some $w$ big enough. 
Note that $\frac{1}{1-\gamma} = \frac{\nu}{2} = \theta \wedge \frac{p}{2}$;
thus we may apply Lemma~\ref{thm:moments} to conclude that for all $w$ sufficiently large,
for any $s \in \left[0,\theta \wedge \frac{p}{2}\right]$, 
we have $\E [\lambda_w^s]<\infty$, where $\lambda_w=\min\{n \ge 0 : W_n \le w\}$. 

It remains to deduce that $\E[\tau_x^s] < \infty$ for all $x$ sufficiently large. But Lemma~\ref{lem:indicator}
shows that $X_n \leq C W_n^{1/\nu}$ for some $C \in \RP$, so $\{W_n \leq w\}$ implies that $\{X_n \leq C w^{1/\nu} \}$.
It follows that $\tau_{Cw^{1/\nu}} \leq \lambda_w$, completing the proof of the theorem.
\end{proof}

\subsection{Proof of Theorem~\ref{thm:L3}}

To obtain non-existence of moments of hitting times, we apply the following semimartingale result,
which is a variation on Theorem~2 from~\cite{AIM}.
\begin{lemma}
\label{thm:nomo}
Let $Z_n$ be a $\cF_n$-adapted stochastic process taking values in an unbounded subset of $\RP$. Suppose that there exist finite positive constants $z$, $B$, and $c$ such that, for any $n \ge 0$,
\begin{align}
\E[Z_{n+1}-Z_n \mid \mathcal{F}_n] & \ge -\frac{c}{Z_n}, \text{ on }\{Z_n \ge z\};
\label{infcond1} \\
\E[(Z_{n+1}-Z_n)^2 \mid \mathcal{F}_n] & \le B, \text{ on } \{Z_n \ge z\}.
\label{infcond2}
\end{align}
Suppose in addition that for some $p_0 > 0$, the process $Z_{n \wedge \lambda_z}^{2p_0}$ is a submartingale,
where $\lambda_z=\min\{n \ge 0 : Z_n \le z\}$. Then for any $r > p_0$, we have $\E[\lambda_z^r \mid Z_0 = z_0 ] = \infty$ for any $z_0 > z$.
\end{lemma}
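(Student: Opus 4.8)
The plan is to follow the scheme of the proof of Theorem~2 in~\cite{AIM}: I would establish a polynomial lower bound $\Pr[\lambda_z > n \mid Z_0 = z_0] \ge c_1 n^{-p_0}$, valid for all sufficiently large $n$, which immediately gives $\E[\lambda_z^r \mid Z_0 = z_0] \ge c_1 \sum_{n} \bigl(n^r - (n-1)^r\bigr) n^{-p_0} = \infty$ for every $r > p_0$, since then $r - 1 - p_0 > -1$. Two easy reductions come first. If $\Pr[\lambda_z = \infty \mid Z_0 = z_0] > 0$ the claim is immediate, so I may assume $\lambda_z < \infty$ a.s., and then every stopping time bounded above by $\lambda_z$ is a.s.\ finite. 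Also, when $p_0 \ge 1$ I may assume $\E[\lambda_z \mid Z_0 = z_0] < \infty$, for otherwise Jensen's inequality gives $\E[\lambda_z^r \mid Z_0 = z_0] \ge (\E[\lambda_z \mid Z_0 = z_0])^r = \infty$ for $r \ge 1$; so the substantive case has $p_0 < 1$.

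For the \emph{first ingredient} --- reaching a high level before returning --- I would fix a large $L$, set $\rho_L := \min\{n \ge 0 : Z_n \ge L\}$, and apply optional stopping to the submartingale $Z_{n \wedge \lambda_z}^{2p_0}$ at the bounded time $\rho_L \wedge \lambda_z \wedge T$, decomposing according to which of $\rho_L, \lambda_z, T$ is smallest (on $\{T < \rho_L\}$ one has $Z_T < L$; on $\{\lambda_z < \rho_L\}$ one has $Z_{\lambda_z} \le z$) and letting $T \to \infty$, to obtain
\[
z_0^{2p_0} \le \E\bigl[ Z_{\rho_L}^{2p_0} \1{\rho_L < \lambda_z} \bigr] + z^{2p_0} .
\]
I would then control the overshoot at level~$L$: writing $Z_{\rho_L} = Z_{\rho_L - 1} + \Delta_{\rho_L - 1}$ with $Z_{\rho_L-1} < L$ and splitting according to the size of the crossing increment, the second-moment bound~\eqref{infcond2} should give $\E[ Z_{\rho_L}^{2p_0} \1{\rho_L < \lambda_z} ] \le C_0 L^{2p_0} \Pr[\rho_L < \lambda_z] + o(L^{2p_0})$, whence $\Pr[\rho_L < \lambda_z] \ge \kappa L^{-2p_0}$ with $\kappa := (z_0^{2p_0} - z^{2p_0})/(2 C_0) > 0$ for all $L$ large.

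For the \emph{second ingredient} --- a diffusive lower bound on the survival time from a high level --- suppose $L > 2z$ and the process currently lies above level~$L$. While it stays above $L/2$, its one-step mean drift is at least $-2c/L$ by~\eqref{infcond1}, so writing $Z$ on that excursion as its Doob compensator plus a martingale whose increments have conditional second moment at most $B$ by~\eqref{infcond2}, Kolmogorov's maximal inequality should give that for a suitable small $c_0 > 0$, independent of $L$, the probability of $Z$ reaching $L/2$ within $c_0 L^2$ steps is at most $\tfrac12$. Hence, started from any state above level~$L$, the process stays above~$z$ for a further $c_0 L^2$ steps with probability at least $\tfrac12$. Combining via the strong Markov property at $\rho_L$, and noting $\{\lambda_z > \rho_L + c_0 L^2\} \subseteq \{\lambda_z > n\}$ for $n \le c_0 L^2$,
\[
\Pr[\lambda_z > n \mid Z_0 = z_0] \ge \tfrac12 \Pr[\rho_L < \lambda_z] \ge \tfrac{\kappa}{2} L^{-2p_0} , \qquad n \le c_0 L^2 ,
\]
and taking $L = \lceil (n/c_0)^{1/2} \rceil$ yields $\Pr[\lambda_z > n \mid Z_0 = z_0] \ge c_1 n^{-p_0}$ for all large $n$, as required.

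The hard part will be the overshoot and integrability analysis in the first ingredient: extracting $\Pr[\rho_L < \lambda_z] \gtrsim L^{-2p_0}$ from the submartingale property when only the second conditional moment of the increments is controlled is delicate, since both the overshoot past level~$L$ and the passage from the bounded-stopping-time to the unbounded version of optional stopping need care. In the applications of this lemma the process carries the stronger moment control from~\eqref{ass:p-moments}, which I expect to make this estimate routine. The second ingredient is a standard barrier argument and the final combination is elementary.
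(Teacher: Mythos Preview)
The paper does not prove this lemma; it simply records it as ``a variation on Theorem~2 from~\cite{AIM}'' and uses it as a black box. Your plan to reconstruct the argument along the lines of~\cite{AIM} via the two ingredients (hitting a high level before returning, then a diffusive lower bound on the time to come back) is exactly the intended route, and your second ingredient and the optional-stopping bound $z_0^{2p_0}\le \E[Z_{\rho_L}^{2p_0}\1{\rho_L<\lambda_z}]+z^{2p_0}$ are both fine. Your reduction to $p_0<1$, however, does not follow (if $p_0\ge 1$ and $\E[\lambda_z]<\infty$ you still have work to do), and in any case it is unnecessary.

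The real issue is your treatment of the overshoot. Under only~\eqref{infcond2}, the bound you sketch gives at best
\[
\E\!\left[Z_{\rho_L}^{2p_0}\1{\rho_L<\lambda_z}\right]\le (2L)^{2p_0}\Pr[\rho_L<\lambda_z]+C L^{2p_0-2}\,\E[\rho_L\wedge\lambda_z],
\]
and nothing in the hypotheses forces $\E[\rho_L\wedge\lambda_z]=o(L^{2})$; with no upper bound on the drift this term can swamp the first, and importing~\eqref{ass:p-moments} from the applications does not by itself cure that. The clean fix, which works under the stated hypotheses alone and avoids any appeal to extra moments, is to \emph{not} try to bound the overshoot but to exploit it: decompose $\{\rho_L<\lambda_z\}$ dyadically as $A_k=\{Z_{\rho_L}\in[2^kL,2^{k+1}L)\}$, so that optional stopping gives $\sum_k 2^{2p_0 k}\Pr[A_k]\ge c\,L^{-2p_0}$, while your second ingredient (applied at level $2^kL$) yields, conditionally on $A_k$, survival for a further $c_0\,4^kL^2$ steps with probability at least $\tfrac12$. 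Then
\[
\E[\lambda_z^r]\ \ge\ \tfrac12\sum_{k\ge0}(c_0\,4^kL^2)^r\Pr[A_k]\ \ge\ \tfrac12(c_0L^2)^r\sum_{k\ge0}2^{2p_0 k}\Pr[A_k]\ \ge\ C\,L^{2(r-p_0)}\ \longrightarrow\ \infty
\]
as $L\to\infty$ whenever $r>p_0$. This replaces the delicate overshoot estimate by a one-line monotonicity, and makes the ``hard part'' you flagged disappear.
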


We will apply this result with $Z_n := W_n^{1/\nu} = ( f_\nu (X_n , \eta_n ) )^{1/\nu}$, for appropriate choice of $(b_i, i \in S)$.
Thus we must establish some estimates on the first and second moments of the increments of $Z_n$;
this is the purpose of the next result.
\begin{lemma}
\label{lem:z1}
Suppose that~\eqref{ass:basic} holds, and that~\eqref{ass:p-moments} holds for some $p>2$. Suppose also that~\eqref{ass:q-lim} and~\eqref{ass:drift-lamperti} hold.
Then for any $\nu \in (0, p] $, we have
\begin{align*}
\E_{x,i}[Z_{n+1}-Z_n ] 
&= \frac{c_i}{x} + \frac{1}{2x} \sum_{j \in S} (b_{j}-b_i)q_{ij}  + o \left(x^{-1} \right); \text{ and } \\
\E_{x,i}[(Z_{n+1}-Z_n)^2 ] &\le B,
\end{align*}
where $B$ is a constant.
\end{lemma}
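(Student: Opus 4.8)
The plan is to obtain both estimates by relating $Z_n = (f_\nu(X_n,\eta_n))^{1/\nu}$ to $X_n$ via a Taylor expansion and then invoking the increment estimates already established in Lemma~\ref{lem:calf} (or, more directly, the component lemmas feeding into it). First I would write, for $X_n$ large,
\[
Z_n = \left( X_n^\nu + \tfrac{\nu}{2} b_{\eta_n} X_n^{\nu-2} \right)^{1/\nu}
= X_n \left( 1 + \tfrac{\nu}{2} b_{\eta_n} X_n^{-2} \right)^{1/\nu}
= X_n + \tfrac{1}{2} b_{\eta_n} X_n^{-1} + O(X_n^{-3}),
\]
using that the $b_i$ are uniformly bounded. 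So $Z_n$ is $X_n$ plus a small correction, and the increment $Z_{n+1} - Z_n$ decomposes as $(X_{n+1} - X_n) + \tfrac{1}{2}(b_{\eta_{n+1}} X_{n+1}^{-1} - b_{\eta_n} X_n^{-1}) + O(X_n^{-3})$.

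For the first-moment estimate, I would take conditional expectation $\E_{x,i}[\,\blob\,]$ of this decomposition. The leading term contributes $\mu_i(x) = \frac{c_i}{x} + o(x^{-1})$ by~\eqref{ass:drift-lamperti}. The correction term $\tfrac12 \E_{x,i}[b_{\eta_{n+1}} X_{n+1}^{-1} - b_{\eta_n} X_n^{-1}]$ is handled by Lemma~\ref{cal11} with $g(y) = b_y$ and $r = -1$ (after controlling the truncation via the $E_n^\rc$ event using Lemma~\ref{lem:indicator} as in the proof of Lemma~\ref{cal17}); this yields $\tfrac{1}{2x}\sum_{j\in S}(b_j - b_i)q_{ij}(x) + o(x^{-1})$, and then~\eqref{ass:q-lim} replaces $q_{ij}(x)$ by $q_{ij}$ at the cost of an $o(x^{-1})$ error. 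The $O(X_n^{-3})$ remainder is trivially $o(x^{-1})$. Adding these gives exactly the claimed expression.

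For the second-moment bound, I would argue more crudely: from the decomposition, $(Z_{n+1}-Z_n)^2 \le 3(X_{n+1}-X_n)^2 + 3\cdot\tfrac14(b_{\eta_{n+1}}X_{n+1}^{-1} - b_{\eta_n}X_n^{-1})^2 + 3\cdot O(X_n^{-6})$. Taking expectations, the first term is bounded by $3C_p$ via~\eqref{ass:p-moments} with $p \ge 2$ (using $(X_{n+1}-X_n)^2 \le |X_{n+1}-X_n|^p + 1$ if $p>2$, or directly if one assumes the $p=2$ moment is bounded — which follows from the $p>2$ hypothesis by Jensen); the correction and remainder terms are bounded since $\sup_i|b_i| < \infty$ and $X_n, X_{n+1} \ge $ some positive constant on the relevant region, while for small $X_n$ one uses local finiteness. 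This gives a uniform constant $B$. Strictly, one should be slightly careful that $X_{n+1}^{-1}$ in the correction term is controlled even when $X_{n+1}$ is small; but on the bad event $\{X_{n+1} < x_0\}$ one can instead bound $|Z_{n+1} - Z_n|$ using Lemma~\ref{cal21} by $C(1 + X_n) + C|\Delta_n|$ (cf.~\eqref{cal32}), whose square again has bounded expectation by~\eqref{ass:p-moments}.

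The main obstacle is the bookkeeping around the truncation event $E_n^\rc = \{|\Delta_n| > X_n^\zeta\}$ in the first-moment computation: the Taylor expansion $Z_n = X_n + \tfrac12 b_{\eta_n}X_n^{-1} + \cdots$ for $Z_{n+1}$ is only valid on $E_n$ (when $X_{n+1}$ stays large), so one must separately show the $E_n^\rc$ contribution to $\E_{x,i}[Z_{n+1}-Z_n]$ is $o(x^{-1})$. This is precisely analogous to Lemma~\ref{cal17} with $r=1$: choosing $\zeta > 2/p$ (possible since $p > 2$) makes the error $O(x^{1-p\zeta}) = o(x^{-1})$, so the machinery is already in place and the step is routine once set up correctly.
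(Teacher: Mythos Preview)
Your proposal is correct and follows essentially the same approach as the paper: Taylor-expand $Z_n = X_n + \tfrac12 b_{\eta_n}X_n^{-1} + O(X_n^{-3})$, split on the truncation event $E_n$, and apply the component lemmas (the paper uses Lemma~\ref{cal11} with $r=0$ after factoring out $\tfrac{1}{2x}$ rather than your $r=-1$, and handles the $E_n^\rc$ contributions by mimicking Lemma~\ref{cal17} with extra powers of $1/\nu$, just as you anticipate). The only slip is in your second-moment bad-event bound: $(C(1+X_n)+C|\Delta_n|)^2$ does not itself have uniformly bounded expectation in $x$, but its expectation \emph{restricted} to $\{X_{n+1}<x_0\}\subset E_n^\rc$ is $O(x^{2-p})=o(1)$ since that event has probability $O(x^{-p})$, which is what is actually needed.
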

\begin{proof}
Again we define the event   $E_n := \{ | \Delta_n | \leq X_n^\zeta \}$ for $\zeta \in (0,1)$; then
\begin{align}
 \E_{x,i}[Z_{n+1}-Z_n ]  = \E_{x,i} \left[ \left(Z_{n+1}-Z_n \right) \2{ E_n}  \right] + \E_{x,i} \left[ \left( Z_{n+1}-Z_n \right) \2{ E^\rc_n}   \right]. \label{cal14}
\end{align}
For the first term on the right-hand side of~\eqref{cal14}, we first notice that for $x$ large enough  
\begin{align*}
f_\nu^{1/\nu}(x,i) = x \left(1 + \frac{\nu}{2}b_i x^{-2} \right)^{1/\nu} = x + \frac{b_i}{2x} + O(x^{-3}).
\end{align*}
Also, given $(X_n,\eta_n)=(x,i)$, on $E_n$  we have $|X_{n+1}-X_n| \le x^\zeta$ so that
$Z_{n+1} \2{E_n} = X_{n+1} + \frac{b_{\eta_{n+1}}}{2x} +o(x^{-1})$. As a result we get
\begin{align}
\E_{x,i} \left[ \left(Z_{n+1}-Z_n \right) \2{ E_n}  \right] &= \E_{x,i} \left[(X_{n+1}-X_n) \2{ E_n}  \right] + \frac{1}{2x} \E_{x,i} \left[(b_{\eta_{n+1}}-b_{\eta_n})\2{ E_n}  \right] +o(x^{-1}) \nonumber \\
&= \frac{c_i}{x} + \frac{1}{2x} \sum_{j \in S} (b_{j}-b_i)q_{ij}  + o \left(x^{-1} \right) ,
\label{cal80}
\end{align}
where in the last equality we used the $r=1$ case of Lemma~\ref{cal4} and the $r=0$ case of Lemma~\ref{cal11} for the first and second term respectively. On the other hand, to bound  
$\E_{x,i} \left[ \left(Z_{n+1}-Z_n \right) \2{ E^\rc_n} \right]$, we can just mimic  the proof of Lemma~\ref{cal17}, inserting additional
powers of $1/\nu$, to obtain
\[ 
\E_{x,i} \left[ \left(Z_{n+1}-Z_n \right) \2{ E^\rc_n} \right] = o(x^{-1}) ,
\]
which combined with~\eqref{cal80} gives the first statement in the lemma. For the second moment,
given $(X_n, \eta_n) = (x,i)$ we have
\begin{align*}
(Z_{n+1}-Z_n)^2 \2{ E_n} &\le (X_{n+1}-X_n)^2 \2{ E_n} + \frac{|X_{n+1}-X_n|}{ x}  |b_{\eta_{n+1}}-b_{\eta_n}| \2{ E_n} + O(x^{-1}) \\
&\le (X_{n+1}-X_n)^2 + o(1).
\end{align*}
 Taking expectations, we obtain 
\begin{align*}
\E_{x,i}\left[ (Z_{n+1}-Z_n)^2 \2{ E_n} \right] \le C,
\end{align*}
for some $C \in \RP$. On the other hand, we follow the proof of Lemma~\ref{cal17}, inserting powers of $2/\nu$ and $1/\nu$ respectively, to get  
\begin{align*}
\E_{x,i}\left[ (Z_{n+1}-Z_n)^2 \2{ E^\rc_n} \right] &= \E_{x,i}[(Z_{n+1}^2-Z_n^2)\2{ E^\rc_n} ] -2\E_{x,i}[Z_n(Z_{n+1}-Z_n)\2{ E^\rc_n} ] \\
&= o(1).  
\end{align*}
Combining these estimates the second statement in the lemma follows.
\end{proof}

Now we can complete the proof of Theorem~\ref{thm:L3}.

\begin{proof}[Proof of Theorem~\ref{thm:L3}.]
Take $\nu =   2 \theta$.
We will apply Lemma~\ref{thm:nomo} with $Z_n=(f_\nu(X_n, \eta_n))^{1/\nu}$ and $p_0 = \frac{\nu}{2}$;
we must verify~\eqref{infcond1} and~\eqref{infcond2}, and that $Z_{n \wedge \lambda_z}^{\nu}$ is a submartingale.
For the latter condition, it suffices to show that
\begin{equation}
\label{eq:Z-sub}
\E[ f_\nu (X_{n+1}, \eta_{n+1}) - f_\nu (X_n, \eta_n) \mid  X_n, \eta_n ] \ge 0, \text{ on } \{Z_n>z\}.
\end{equation}
By  hypothesis~\eqref{eqn:L3} and Lemma~\ref{lem:L2}(ii), writing $u_i = 2 c_i + (2 \theta -1) s_i^2$,
we may find $(b_i, i\in S)$ so that $u_i + \sum_{j \in S} ( b_j - b_i ) q_{ij} > 0$ for all $i$,
and then~\eqref{eq:Z-sub}
follows from the $\nu = 2 \theta$ case of Lemma~\ref{lem:calf}.

Of the remaining two conditions,~\eqref{infcond2} follows immediately from  the second statement in Lemma~\ref{lem:z1},
provided $\nu \leq p$, i.e., $\theta \leq \frac{p}{2}$.
From the first statement in Lemma~\ref{lem:z1}, we have that for all $x$ sufficiently large
\begin{align}
\label{infcond1a}
 \E_{x,i} [Z_{n+1}-Z_n  ]  
= \frac{1}{x} \left( c_i + \frac{1}{2} \sum_{j \in S} (b_{j}-b_i)q_{ij} + o(1) \right)   
 \ge - \frac{C_1}{x}   ,
\end{align}
for all $i$ and all $x$ sufficiently large, where $C_1 \in \RP$ depends on the $c_i$ and $b_i$. Now
Lemma~\ref{cal21} implies that $Z_n \leq C_2 X_n$ for some $C_2 >0$, so we deduce condition~\eqref{infcond1}
from~\eqref{infcond1a}.

Thus Lemma~\ref{thm:nomo} shows that $\E [ \lambda_z^r] = \infty$ for $X_0$ sufficiently large and $r > \theta$.
But if~\eqref{eqn:L3} holds for some $\theta \in (0, \frac{p}{2}]$, then by continuity~\eqref{eqn:L3} also holds
for some $\theta' \in (0, \theta)$, so $\E [ \lambda_z^r] = \infty$ for $r = \theta$ too.
\end{proof}

\section{Proofs of results for generalized Lamperti drift}

\subsection{Transformation to Lamperti drift case}

The idea behind the proofs of the results in Section~\ref{ss:gld} is to transform the process $(X_n, \eta_n)$ with generalized Lamperti drift
to a process $(\tX_n, \eta_n ) = (X_n +a_{\eta_n} , \eta_n)$, with appropriate choices of the real numbers $a_i$, $i \in S$, that has Lamperti drift, i.e., the constant
components of the drifts are eliminated; then we can apply the results in Section~\ref{ss:ld}, once we have at hand increment moment estimates for the
transformed process $(\tX_n, \eta_n)$.

For $d_i$, $i \in S$ with $\sum_{i \in S} \pi_i d_i = 0$ as specified under assumption~\eqref{ass:drift-gl}, 
choose $a_i$, $i \in S$ as guaranteed by Lemma~\ref{lem:L4}, so that $d_i+\sum_{j \in S}(a_j-a_i)q_{ij}=0$;
since we are free to translate the $a_i$ by any constant, we may (and do) suppose here that $a_i \geq 0$ for all $i \in S$.

Let $\Sigma' = \{ (x+a_i, i) : (x,i) \in \Sigma \}$ denote the state space of the transformed process; then $\Sigma'$ is a locally
finite subset of $\RP \times S$ with unbounded lines $\Lambda'_k = \{ x \in \RP : (x,k) \in \Sigma' \}$. The map $(x,i) \mapsto 
(x+a_i , i)$ is a bijection, so the Markov chain $(\tX_n, \eta_n )$ has precisely the same abstract structure as the Markov chain $(X_n, \eta_n)$, in particular,
the transformed process is (positive-)recurrent if and only if the original process is (positive-)recurrent, and so on.
Thus to obtain results for the process $(X_n, \eta_n)$ it is sufficient to prove results for the transformed process $(\tX_n, \eta_n )$.

For the increment moments of the transformed process, we use the notation
\begin{align*}
\widetilde{\mu}_i(y) & := \E[\widetilde{X}_{n+1}-\widetilde{X}_n \mid \widetilde{X}_n=y,\eta_n=i], \\
\widetilde{\sigma}_i^2(y) & := \E[(\widetilde{X}_{n+1}-\widetilde{X}_n)^2 \mid \widetilde{X}_n=y,\eta_n=i].
\end{align*}
\begin{lemma}
\label{lem:transform}
Suppose that~\eqref{ass:basic} holds, and that~\eqref{ass:p-moments} holds for some $p>2$. Suppose also that~\eqref{ass:q-lim-gl} and~\eqref{ass:drift-gl} hold.
Define $a_i$, $i \in S$ to be a solution to~\eqref{e:d-system} with $a_i \geq 0$ for all $i$,
whose existence is guaranteed by Lemma~\ref{lem:L4}.
Either (i) set $\delta_4 = 0$; or (ii) suppose that~\eqref{ass:q-lim-gl+} and~\eqref{ass:drift-gl+} hold and set $\delta_4 = \delta_2 \wedge \delta_3 \in (0,1)$. 
For $i \in S$, define 
\begin{equation}
\label{e:new_constants}
  c_i := e_i+\sum_{j \in S}a_j\gamma_{ij}, ~~\text{and}~~ s_i^2 := t_i^2 +2\sum_{j \in S}a_jd_{ij} + \sum_{j \in S}(a_j^2-a_i^2)q_{ij}. \end{equation}
Then we have that, as $x \to \infty$,
\begin{align*}
\widetilde{\mu}_i(x)  = \frac{c_i}{x} +o(x^{-1-\delta_4}), ~~\text{and}~~ \widetilde{\sigma}_i^2(x)  = s_i^2 + o(x^{-\delta_4}) .
\end{align*}
\end{lemma}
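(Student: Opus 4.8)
The plan is to compute the first two increment moments of $\tX_n = X_n + a_{\eta_n}$ directly from those of $(X_n,\eta_n)$ by conditioning on $\eta_n = i$ and expanding in powers of $1/x$. First I would write, for the mean drift,
\[
\widetilde\mu_i(y) = \E_{x,i}[X_{n+1} - X_n] + \E_{x,i}[a_{\eta_{n+1}} - a_{\eta_n}],
\]
where $y = x + a_i$, so that $x = y - a_i$ and $1/x = 1/y + O(y^{-2})$. The first term is $\mu_i(x) = d_i + e_i/x + o(x^{-1})$ by~\eqref{ass:drift-gl}(a) (or with the $o(x^{-1-\delta_2})$ error in case (ii) under~\eqref{ass:drift-gl+}). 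The second term is $\sum_{j\in S} a_j q_{ij}(x) - a_i = \sum_{j\in S}(a_j - a_i)q_{ij}(x)$, and substituting $q_{ij}(x) = q_{ij} + \gamma_{ij}/x + o(x^{-1})$ from~\eqref{ass:q-lim-gl} (or~\eqref{ass:q-lim-gl+} in case (ii)) splits this into a constant part $\sum_j (a_j - a_i)q_{ij}$ and a $1/x$ part $\sum_j (a_j - a_i)\gamma_{ij}/x$. The constant parts combine to $d_i + \sum_j (a_j - a_i)q_{ij} = 0$ exactly by the defining equation~\eqref{e:d-system} — this cancellation is the whole point of the transformation. The $1/x$ coefficient is then $e_i + \sum_j (a_j - a_i)\gamma_{ij} = e_i + \sum_j a_j \gamma_{ij}$, using $\sum_j \gamma_{ij} = 0$, which is exactly $c_i$ as defined in~\eqref{e:new_constants}. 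Finally I would convert the $1/x$ expansion into a $1/y$ expansion; since $x$ and $y$ differ by the bounded constant $a_i$, this only affects the error term and (in case (ii)) one must check $\delta_4 \le 1$ so the $O(y^{-2})$ correction is absorbed into $o(y^{-1-\delta_4})$.

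For the second moment I would expand
\[
(\tX_{n+1} - \tX_n)^2 = (X_{n+1} - X_n)^2 + 2(X_{n+1}-X_n)(a_{\eta_{n+1}} - a_{\eta_n}) + (a_{\eta_{n+1}} - a_{\eta_n})^2,
\]
and take $\E_{x,i}[\,\cdot\,]$ of each piece. The first expectation is $\sigma_i^2(x) = t_i^2 + o(1)$ (resp.\ $+o(x^{-\delta_2})$). The cross term is $2\E_{x,i}[(X_{n+1}-X_n)(a_{\eta_{n+1}} - a_{\eta_n})] = 2\sum_{j\in S} a_j \mu_{ij}(x) - 2a_i\mu_i(x)$; here $\mu_{ij}(x) = d_{ij} + o(1)$ by~\eqref{ass:drift-gl}(c), while $\mu_i(x) = d_i + o(1) = \sum_j d_{ij} + o(1)$, so the whole cross term is $2\sum_j a_j d_{ij} - 2a_i \sum_j d_{ij} + o(1) = 2\sum_j(a_j - a_i)d_{ij} + o(1)$. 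Wait — I should be careful: the target constant in~\eqref{e:new_constants} is $2\sum_j a_j d_{ij}$ with the $a_i$-part instead carried by the $\sum_j(a_j^2 - a_i^2)q_{ij}$ term, so I would keep the cross term as $2\sum_j a_j d_{ij} - 2a_i d_i + o(1)$ and fold the $-2a_i d_i$ into the last piece. The last expectation is $\E_{x,i}[(a_{\eta_{n+1}} - a_{\eta_n})^2] = \sum_j (a_j - a_i)^2 q_{ij}(x) = \sum_j(a_j^2 - 2a_i a_j + a_i^2)q_{ij}(x)$, which to leading order is $\sum_j(a_j^2 + a_i^2)q_{ij} - 2a_i \sum_j a_j q_{ij} + o(x^{-\delta_3})$; combined with the $-2a_i d_i$ and using $\sum_j a_j q_{ij} - a_i = \sum_j(a_j-a_i)q_{ij} = -d_i$, the $a_i$-linear terms reorganise to leave precisely $\sum_j(a_j^2 - a_i^2)q_{ij}$ as the residual constant, matching~\eqref{e:new_constants}. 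Summing the three pieces gives $s_i^2 + o(1)$ (resp.\ $+ o(x^{-\delta_4})$), and again one changes variable from $x$ to $y$ at negligible cost.

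The main obstacle is purely bookkeeping: keeping the $a_i$-linear and $a_i^2$-quadratic contributions straight across the three sources (drift, cross term, jump of $a_{\eta}$) so that they collapse exactly into the stated $c_i$ and $s_i^2$, and in particular verifying that the cancellation in the mean drift's constant part is precisely~\eqref{e:d-system} and that $\sum_j \gamma_{ij} = 0$ is used in the right place. A secondary point requiring a line of care is the passage from expansions in $1/x$ to expansions in $1/y$: because $a_i$ is a fixed bounded constant, $y^{-1-\delta_4} = x^{-1-\delta_4}(1 + O(x^{-1}))$ and the discrepancy is $O(x^{-2-\delta_4}) = o(x^{-1-\delta_4})$, so the error terms are unaffected; in case (i), where $\delta_4 = 0$, this is trivial. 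No sharper estimate is needed because the bounded-moments hypothesis~\eqref{ass:p-moments} is inherited by $(\tX_n,\eta_n)$ up to an additive constant, so all the truncation machinery behind the cited results applies to the transformed process verbatim.
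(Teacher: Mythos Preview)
Your proposal is correct and follows essentially the same route as the paper's own proof: write $\tX_{n+1}-\tX_n = (X_{n+1}-X_n) + (a_{\eta_{n+1}}-a_{\eta_n})$, expand the first and second moments, feed in the asymptotics from~\eqref{ass:drift-gl}/\eqref{ass:drift-gl+} and~\eqref{ass:q-lim-gl}/\eqref{ass:q-lim-gl+}, and use~\eqref{e:d-system} together with $\sum_j\gamma_{ij}=0$ to kill the constant part of the drift. The only cosmetic difference is in the second-moment bookkeeping: the paper keeps the cross term as $2\sum_j(a_j-a_i)d_{ij}$ and the square term as $\sum_j(a_j-a_i)^2 q_{ij}$, then pulls out the combination $-2a_i\bigl(d_i+\sum_j(a_j-a_i)q_{ij}\bigr)$ and sets it to zero via~\eqref{e:d-system}, whereas you route the same cancellation through the identity $\sum_j a_j q_{ij}=a_i-d_i$; both regroupings land on the stated $s_i^2$.
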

\begin{proof}
For concreteness, we give the proof when~\eqref{ass:q-lim-gl+} and~\eqref{ass:drift-gl+} hold; in the other
case the argument is the same but with $\delta_4$ set to $0$ throughout.
For the first moment, we have
\begin{align*}
\widetilde{\mu}_i(x)
 &= \E_{x-a_i,i}[X_{n+1}-X_n]+\sum_{j \in S}\E_{x-a_i,i}\left[\left(a_{\eta_{n+1}}-a_{\eta_n}\right) \1 {a_{\eta_{n+1}}=j }\right] \\
&= \mu_i(x-a_i)+\sum_{j \in S}(a_j-a_i)q_{ij}(x-a_i).
\end{align*}
Now using hypothesis~(a) in~\eqref{ass:drift-gl+} and~\eqref{ass:q-lim-gl+} we obtain
\begin{align*}
\widetilde{\mu}_i(x) &= d_i + \frac{e_i}{x-a_i}+\sum_{j \in S}(a_j-a_i)q_{ij}+\sum_{j \in S}(a_j-a_i)\frac{\gamma_{ij}}{x-a_i} +o((x-a_i)^{-1-\delta_4}) \\
&= d_i +\sum_{j \in S}(a_j-a_i)q_{ij} + \frac{e_i}{x} + \frac{1}{x} \sum_{j \in S} a_j\gamma_{ij} +o(x^{-1-\delta_4}),
\end{align*}
since $\sum_{j \in S}\gamma_{ij}=0$. By hypothesis~(d)  in~\eqref{ass:drift-gl+} and choice of the $a_i$
(cf Lemma~\ref{lem:L4}), the constant term here vanishes, so we obtain the
expression for $\widetilde{\mu}_i(x)$ in the lemma.

For the second moment, we have
\begin{align*}
\widetilde{\sigma}_i^2(x) 
&= \E_{x-a_i,i} \left[(X_{n+1}-X_n)^2 \right]+2\E_{x-a_i,i}\left[ (a_{\eta_{n+1}}-a_{\eta_n})(X_{n+1}-X_n)\right]+\E_{x-a_i,i}\left[a_{\eta_{n+1}}^2-a_{\eta_n}^2 \right] \\
&= s_i^2 + 2\sum_{j \in S}(a_j-a_i)\mu_{ij}(x-a_i)+\sum_{j \in S}(a_j-a_i)^2 q_{ij}(x-a_i) + o ( x^{-\delta_4} ) . \end{align*}
Then using hypothesis~(c) in~\eqref{ass:drift-gl+} and~\eqref{ass:q-lim-gl+} we obtain
\begin{align*}  
\widetilde{\sigma}_i^2(x) & = s_i^2 + 2\sum_{j \in S}(a_j-a_i)d_{ij} + \sum_{j \in S}(a_j-a_i)^2 q_{ij} +o(x^{-\delta_4})  
\\
&= s_i^2 + 2\sum_{j \in S}a_j d_{ij} + \sum_{j \in S}(a_j^2-a_i^2) q_{ij} -2a_i \biggl( d_i + \sum_{j \in S}(a_j-a_i) q_{ij} \biggr)   +o(x^{-\delta_4}),
\end{align*}
which gives the result after once again using the fact that $d_i+\sum_{j \in S}(a_j-a_i) q_{ij}=0$.
\end{proof}

\subsection{Proofs of Theorems~\ref{thm:L1}, \ref{thm:L4} and~\ref{thm:L5}}

Armed with our transformation of the process, we can now use the results in Section~\ref{ss:ld} to complete the proofs of the
theorems in Section~\ref{ss:gld}.

\begin{proof}[Proof of Theorem~\ref{thm:L1}]
Lemma~\ref{lem:transform} shows that if $(X_n, \eta_n)$ satisfies the conditions of Theorem~\ref{thm:L1},
then $(\tX_n, \eta_n)$ satisfies the conditions of Theorem~\ref{thm:GW}
with $c_i$ and $s_i^2$ as given by~\eqref{e:new_constants}.
Theorem~\ref{thm:GW} shows that the process is transient if
\begin{align*}
0 < \sum_{i \in S}[2c_i-s_i^2]\pi_i =& \sum_{i \in S}\left[2e_i + 2\sum_{j \in S}a_j\gamma_{ij}-\left(t_i^2 +2\sum_{j \in S}a_jd_{ij} + \sum_{j \in S}(a_j^2-a_i^2)q_{ij}\right)\right]\pi_i \\
=& \sum_{i \in S}\left(2e_i -t_i^2 + 2\sum_{j \in S}a_j(\gamma_{ij}-d_{ij})\right)\pi_i -\sum_{i \in S}\sum_{j \in S}(a_j^2-a_i^2)q_{ij}\pi_i ,
\end{align*}
using the expressions at~\eqref{e:new_constants}. Note that the final term here vanishes, because
\begin{align*}
\sum_{i \in S}\sum_{j \in S}(a_j^2-a_i^2)q_{ij}\pi_i 
&= \sum_{j \in S}a_j^2\sum_{i \in S}q_{ij}\pi_i - \sum_{i \in S}a_i^2\pi_i\sum_{j \in S}q_{ij} \\
&= \sum_{j \in S}a_j^2\pi_j - \sum_{i \in S}a_i^2\pi_i =0,
\end{align*}
using the fact that $\bpi$ is the stationary distribution for $(q_{ij})$. This gives the condition for transience stated in Theorem~\ref{thm:L1}.

Similarly,  the condition for positive-recurrence is
\begin{align*}
0 > \sum_{i \in S}[2c_i + s_i^2]\pi_i = & \sum_{i \in S}\left[2e_i + 2\sum_{j \in S}a_j\gamma_{ij}+\left(t_i^2 +2\sum_{j \in S}a_jd_{ij} + \sum_{j \in S}(a_j^2-a_i^2)q_{ij}\right)\right]\pi_i \\
=& \sum_{i \in S}\left(2e_i +t_i^2 + 2\sum_{j \in S}a_j(\gamma_{ij}+d_{ij})\right)\pi_i +\sum_{i \in S}\sum_{j \in S}(a_j^2-a_i^2)q_{ij}\pi_i \\
=& \sum_{i \in S}\left(2e_i +t_i^2 + 2\sum_{j \in S}a_j(\gamma_{ij}+d_{ij})\right)\pi_i ,
\end{align*}
which gives the condition for positive-recurrence in the theorem.
The case for null-recurrence and at the critical point follows accordingly by the same calculation.
\end{proof}

\begin{proof}[Proof of Theorem~\ref{thm:L4}]
The proof is analogous to the proof of Theorem~\ref{thm:L1}, this time applying Theorem~\ref{thm:L2} to the transformed process.
\end{proof}

\begin{proof}[Proof of Theorem~\ref{thm:L5}]
This time we apply Theorem~\ref{thm:L3} to the transformed process.
\end{proof}

\section{Appendix: Proof of Theorem~\ref{thm:Example}}
In this last section we complete the proof of Theorem~\ref{thm:Example} on  correlated random walk given in the introduction.

\begin{proof}[Proof of Theorem~\ref{thm:Example}]
Note first that $q_{ii} = q$ and $q_{ij} = 1-q$ for $j \neq i$; hence $\pi = (\frac12,\frac12)$.
By direct calculation, we get 
$\mu_{i}(x)=i (2q-1) + \frac{c_i}{x} + O (x^{-1-\delta})$. This gives $d_{i}= i (2q-1)$ and $e_{i}=c_i$. Now we want to solve the system of equations~\eqref{e:d-system} for $a_{i}$,
which amounts to 
\[
1-2q+(a_{+1}-a_{-1})(1-q)=0.
\]
Since the solution $(a_i)$ is unique up to translation, without loss of generality we can choose $a_{-1}=0$ and
then we get $a_{+1} =\frac{2q-1}{1-q}$. Next we observe that $d_{ii}=q$,
while if $i \neq j$ we have  $d_{ij}=1-q$; also, 
 $\gamma_{ij}= \frac{j c_i}{2}$ and $t_i^2=1$. 
Now we calculate 
\begin{align*}
 \sum_{i \in S}\biggl( 2e_i+2\sum_{j \in S}a_j \gamma_{ij} \biggr) \pi_i & = e_{+1} + e_{-1} +a_{+1} \gamma_{+1,+1} + a_{+1} \gamma_{-1,+1}    = \frac{c_{+1} + c_{-1}}{2(1-q)} ; \text{ and} \\
\sum_{i \in S} \biggl( t_i^2+2\sum_{j \in S}a_j d_{ij} \biggr) \pi_i & = 1 +a_{+1} d_{+1,+1} + a_{+1} d_{-1,+1} =\frac{q}{1-q}. 
\end{align*}
Then applying Theorem~\ref{thm:L1}  we obtain the desired result.
\end{proof}

\section*{Acknowledgements}

The authors are grateful to Nicholas Georgiou and Mikhail Menshikov for fruitful discussions on the topic of this paper.

\end{document}